\algnewcommand{\algorithmicgoto}{\textbf{go to}}%
\algnewcommand{\Goto}[1]{\algorithmicgoto~\ref{#1}}%
\algnewcommand{\LineComment}[1]{\Statex \(\triangleright\) #1}    
\renewcommand{\algorithmicrequire}{\textbf{\textit{Input: }}}
\renewcommand{\algorithmicensure}{\textbf{\textit{Output: }}}
\newcommand{\nnum}{\nonumber}
\DeclareMathOperator*{\diag}{\textit{diag}}
\DeclareMathOperator*{\trace}{\textit{tr}}
\DeclareMathOperator*{\rank}{\textit{rk}}
\DeclareMathOperator*{\argmin}{argmin}
\newcommand{\E}{\mathbb{E}}
\newcommand{\ubar}[1]{\underaccent{\bar}{#1}}
\newtheorem{theorem}{\bf Theorem}
\newtheorem{lemma}{\bf Lemma}
\newtheorem{assumption}{\bf Assumption}
\newtheorem{claim}{\bf Claim}
\newcommand{\N}{\mathbb{N}}
\newcommand{\R}{\mathbb{R}}
\newcommand{\Ac}{\mathcal{A}}
\newcommand{\Bc}{\mathcal{B}}
\title{\LARGE \bf
Attack-resilient Estimation for Linear Discrete-time Stochastic Systems with  Input and State Constraints
}
\author{Wenbin Wan$^{\dagger}$, Hunmin Kim$^{\dagger}$, Naira Hovakimyan$^{\dagger}$, and Petros G. Voulgaris$^{\ddagger}$
\thanks{This work has been supported by the National Science Foundation (ECCS-1739732 and CMMI-1663460).}
\thanks{$^{\dagger}$Wenbin Wan, Hunmin Kim, and Naira Hovakimyan are with the Department of Mechanical Science and Engineering, University of Illinois at Urbana-Champaign, Urbana, IL 61801, USA.
{\tt\small  \{wenbinw2, hunmin, nhovakim\}@illinois.edu }}%
\thanks{$^{\ddagger}$Petros G. Voulgaris is with the Department of Aerospace Engineering, University of Illinois at Urbana-Champaign, Urbana, IL 61801, USA.
{\tt\small  \{voulgari\}@illinois.edu }}%
}
\begin{document}
\maketitle
\thispagestyle{empty}
\pagestyle{empty}

\begin{abstract}
In this paper, an attack-resilient estimation algorithm is presented for linear discrete-time stochastic systems with state and input constraints. It is shown that the state estimation errors of the proposed estimation algorithm are practically exponentially stable. 
\end{abstract}

\section{Introduction}

Cyber-Physical Systems (CPS) have been of paramount importance in power systems, critical infrastructures, transportation networks and industrial control systems for many decades~\cite{rajkumar2010cyber}. Recent cases of CPS attacks have clearly illustrated the vulnerability of CPS and raised awareness of the security challenges in these systems. 
These include attacks on large-scale systems, such as the StuxNet virus attack on an industrial supervisory control and data acquisition (SCADA) system~\cite{langner2011stuxnet}, German steel mill cyber attack~\cite{lee2014german}, and attacks on modern vehicles~\cite{koscher2010experimental, checkoway2011comprehensive}.

\emph{Literature review.}
Traditionally, cyber-attack detection has been studied by monitoring the cyber-space misbehavior~\cite{raiyn2014survey}. 
With the emergence of CPS, it becomes vitally important to monitor the physical misbehavior as well, because the attacks on CPS always have an impact on physical systems. 
Model-based detection has been intensively studied in recent years.
Attack detection has been formulated as an $\ell_0$/$\ell_\infty$ optimization problem, which is non-deterministic polynomial-time hard (NP-hard) in~\cite{fawzi2014secure,pajic2014robustness,pajic2017attack}. 
A convex relaxation has been studied in~\cite{fawzi2014secure,pajic2017attack}.
On top of this, the worst case estimation error has been analysed in~\cite{pajic2017attack}. 
A residual-based detector has been designed for power systems against false data injection attacks, and the impact of attacks has been analyzed in~\cite{liu2011false}.
Linear algebraic conditions, as well as graph-theoretic conditions for detectability and identifiability have been provided in~\cite{pasqualetti2013attack}.
A multi-rate controller to detect zero-dynamic attacks has been designed in~\cite{jafarnejadsani2018multirate}.
While most of the detection techniques were passive, some papers have studied active detection~\cite{mo2009secure,mo2014detecting}, where the control input is watermarked with a pre-designed scheme that sacrifices optimality.
The attack detection problem has been formulated as a simultaneous estimation problem of the state and the unknown input in~\cite{yong2015resilients}.
The approach has been extended to nonlinear systems in~\cite{kim2017attack}, constrained systems in~\cite{yong2015simultaneous}, and stochastic random set methods in~\cite{forti2016bayesian}.
The aforementioned detection algorithms rely on stochastic thresholds. For accurate detection, a smaller covariance is desired.

To reduce the covariance, the current paper focuses on information aggregation. In particular, we consider inequality state constraints and input constraints.
There is a rich literature on Kalman filter with constraints~\cite{simon2002kalman,julier2007kalman,ko2007state}.
We refer to~\cite{simon2010kalman} for more details for constrained filtering. 
Unknown input estimation algorithm with input constraints is introduced in~\cite{yong2015simultaneous}.
The current paper considers both inequality state and input constraints for unknown input estimation.

\emph{Contribution.}
We design an attack-resilient estimation algorithm given inequality constraints on the states and the attacks.
The proposed algorithm consists of actuator attack estimation and state estimation.
For each step, we design an optimal linear estimator without considering the constraints and then project the estimates onto the constrained space.
We prove that the projection reduces the estimation error, as well as the error covariance.
The practical exponential stability of the estimation error is proved formally.
A numerical simulation on multi-agent robotic system shows the performance of the proposed attack-resilient estimation algorithm.

The paper is organized as follows: Section \ref{pre} introduces and notations, preliminaries on $\chi^2$ test detection and the problem statement.
Section \ref{sec:alsop} describes the high-level idea of the algorithm.
Section \ref{algodetail} gives a detailed algorithm derivation.
Section \ref{analysis} investigates stability analysis of the algorithm, and all the proofs are presented in Appendix for compactness.
Section \ref{simulation} presents a numerical simulation.
Section \ref{conclusion} draws  conclusions.

\section{Preliminaries} \label{pre}
This section discusses some preliminary knowledge including notations, motivation, and problem statement.

\subsection{Notations}
The following notations are adopted:
We use the subscript $k$ of $x_k$ to denote the time index;
${\mathbb R}^n$ denotes the n-dimensional Euclidean space;
${\mathbb R}^{n \times m}$ denotes the set of all $n \times m$ real matrices;
$A^\top$, $A^{-1}$, $A^\dagger$, $\diag(A)$, $\trace(A)$ and $\rank(A)$ denote the transpose, inverse, Moore-Penrose pseudoinverse, diagonal, trace and rank of matrix $A$, respectively;
$I$ denotes the identity matrix with an appropriate dimension;
$\|\cdot\|$ denotes the standard Euclidean norm for vector or an induced matrix norm;
${\mathbb E}[\,\cdot\,]$ denotes the expectation operator;
$\times$ is used to denote matrix multiplication when the multiplied terms are in different lines.
For a symmetric matrix $S$, $S > 0$ and $S \geq 0$ indicates that $S$ is positive definite and positive semi-definite, respectively. For a vector $a$, $(a)(i)=a(i)$ denotes the $i^{th}$ element in the vector $a$. Finally
$a$, $\hat{a}$, $\tilde{a} \triangleq a - \hat{a}$ denote the true value, estimate and estimation error of $a$. 

\subsection{Motivation}\label{sec:chi}
\subsubsection{$\chi^2$ test for detection}
In attack detection for stochastic systems, the $\chi^2$ test is widely used~\cite{mo2014detecting, teixeira2010cyber}. The $\chi^2$ test can be stated as:\\
\emph{Given a sample $\hat{v}$ of a Gaussian random vector $v$ with unknown mean and known covariance $\Sigma_v$, the $\chi^2$ test provides statistical evidence of whether $v=0$ or not.
The sample is being normalized by $\hat{v}^\top \Sigma_v^{-1} \hat{v}$ and compared with $\chi^2$ test value.
If $\hat{v}^\top \Sigma_v^{-1} \hat{v}>\chi^2$, then we reject the null hypothesis $H_0: v=0$, and accept alternative hypothesis $H_1: v \neq 0$; i.e., there is significant statistical evidence that $v$ is non-zero.
Otherwise, we accept the null hypothesis; i.e., there is no significant evidence that $v$ is non-zero.}

Given a fixed attack input $v \neq 0$ and attack input estimate $\hat{v} \neq 0$, a smaller covariance induces a larger normalized test value $\hat{v}^\top \Sigma_v^{-1} \hat{v}$, which decreases false negative rates.
To reduce the covariance, the minimum variance estimation method is being considered intensively~\cite{kitanidis1987unbiased,gillijns2007unbiased1,gillijns2007unbiased2}. The current paper pursues an optimal filter design technique.

\subsubsection{Constraints}
It has been shown that constraints can be used to further reduce the covariance in optimal filtering; i.e., state constraints in Kalman filter (KF)~\cite{ko2007state,simon2010kalman}, and input constraints in input and state estimation (ISE)~\cite{yong2015simultaneous}.
We consider linear filtering with both input and state constraints to reduce false negative rates in attack detection and to achieve accurate state estimation.
The constraints are induced by unmodeled dynamics and operational processes.
Some of these examples include vision-aided inertial navigation~\cite{mourikis2007multi}, target tracking~\cite{wang2002filtering} and power systems~\cite{yong2015simultaneous,wood2013power}.

\subsection{Problem Statement}
Consider the linear time-varying discrete-time stochastic system:
\begin{equation}
\begin{split}
    x_{k+1} &= A_kx_k+B_ku_k+G_{k}d_k+w_k\\
    y_{k} &= C_{k}x_k + v_{k},  \label{eq:sys2}
\end{split}
\end{equation}
where $x_k \in \R^n$, $u_k \in \R^m$, $d_k \in \R^p$ and $y_{k} \in \R^{l}$ are the state, the known input, the unknown actuator attack and sensor measurement, respectively.
Noises $w_k$ and $v_{k}$ are assumed to be independent identically distributed (i.i.d.) Gaussian random variables with zero means and covariances $Q_k \triangleq \E[w_k w_k^\top] \geq 0 $ and $R_{k} \triangleq \E[v_{k} v_{k}^\top] > 0 $ respectively.
Moreover, $v_{k}$ is also uncorrelated with the initial state $x_0$ and process noise $w_k$. We assume that $\rank(C_kG_{k-1})=p$ as in~\cite{darouach1997unbiased,yong2016unified}.

In the cyber-space, digital attack signals could be unconstrained, but their impact on the physical world is restricted by physical and operational constraints (i.e., $d_k$ is constrained).
Any physical constraints and ability limitations on states and actuator attacks are presented by known inequality constraints:
\begin{equation}
    \mathcal{A}_kd_k \leq b_k, \ \mathcal{B}_kx_k \leq c_k. \label{eq:2}
\end{equation}
We assume that the feasible sets of the constraints ${\mathcal A}_k d_k \leq b_k$ and $\Bc_k x_k \leq c_k$ are non-empty.
The vectors $b_k$ and $c_k$, matrices $\mathcal{A}_k, \mathcal{B}_k, A_k, B_k, C_k$ and $G_k$ are known and bounded. The attacker is able to inject any signal $d_k$ that satisfies the constraint in~\eqref{eq:2}.

The estimator design problem, addressed in this paper, can be stated as:
\emph{Given a linear discrete-time stochastic system \eqref{eq:sys2} with constraints on the actuator attack and state \eqref{eq:2}, design an attack-resilient and stable filtering algorithm that simultaneously estimates the system state and actuator attack.}

\section{Algorithm Design}\label{sec:algod}
In this section, we design an attack-resilient estimation algorithm with  inequality constraints.
The algorithm design is motivated by unknown input estimation~\cite{darouach1997unbiased,yong2016unified,gillijns2007unbiased}, and a projection method for inequality constraint~\cite{yong2015simultaneous,simon2010kalman}.
We design an estimation algorithm as in~\cite{darouach1997unbiased,yong2016unified,gillijns2007unbiased} without considering the constraint, then project the estimates using inequality constraints as in~\cite{yong2015simultaneous, simon2010kalman}.

\begin{figure}[thpb]
\centering
 \includegraphics[width=0.46\textwidth]{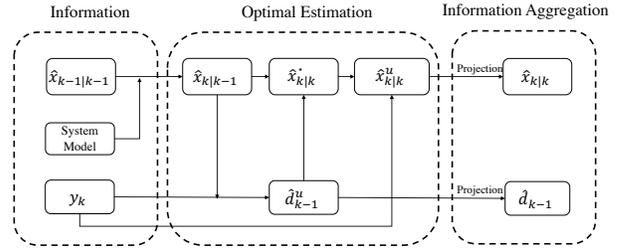}
 \vspace*{-2mm}\caption{The algorithm consists of two parts: optimal estimation and information aggregation. The optimal estimation provides unbiased minimum variance estimates (if the previous state estimate is unbiased) that then be projected in the information aggregation for better estimates.}
\medskip
\label{fig:demo}
\end{figure}

\subsection{Algorithm Statement}\label{sec:alsop}
Given the measurements up to time $k$ and previous state estimate $\hat{x}_{k-1|k-1}$, the proposed algorithm can be summarized as follows:
\begin{enumerate}
\item\emph{Prediction}: 
\begin{align}
    \hat{x}_{k|k-1}=A_{k-1} \hat{x}_{k-1|k-1}+B_{k-1} u_{k-1} \label{eq:statepredict}
\end{align}
\item\emph{Actuator attack estimation}: 
\begin{align}
    &\hat{d}_{k-1}^{u} =M_{k} (y_{k}-C_{k} \hat{x}_{k|k-1})\label{eq:IE1}\\
    &\hat{d}_{k-1} = \argmin\limits_d
    (d-\hat{d}_{k-1}^{u})^\top(P_{k-1}^{d,u})^{-1}(d-\hat{d}_{k-1}^{u})\nnum\\
    &\quad \quad \quad \ {\rm subject\ to \ } \mathcal{A}_{k-1} d \leq b_{k-1}\label{eq:IE2}
\end{align}
\item\emph{Time update}:
\begin{align}
    \hat{x}^\star_{k|k}=\hat{x}_{k|k-1}+G_{k-1} \hat{d}_{k-1}^{u} \label{eq:hat x star}
\end{align}
\item\emph{Measurement update}: 
\begin{align}
    &\hat{x}^u_{k|k}=\hat{x}^\star_{k|k} +L_k(y_{k}-C_{k} \hat{x}^\star_{k|k}) \label{eq:TU2}\\
    &\hat{x}_{k|k} = \argmin\limits_x (x-\hat{x}^{u}_{k|k})^\top (P_{k}^{x,u})^{-1}(x-\hat{x}^{u}_{k|k})\nnum\\
    &\quad \quad \quad \ {\rm subject\ to \ } \mathcal{B}_{k} x \leq c_{k},
    \label{eq:TU3}
\end{align}
\end{enumerate}

Given the previous state estimate $\hat{x}_{k-1|k-1}$, the defender can predict the current state $\hat{x}_{k|k-1}$ under the assumption that the unknown actuator attack is absent (i.e., $d_{k-1}=0$) in~\eqref{eq:statepredict}.
The estimation of the unconstrained actuator attack $\hat{d}_{k-1}^{u}$ can be obtained by observing the difference between the predicted output $C_{k}\hat{x}_{k|k-1}$ and the measured output $y_{k}$ in~\eqref{eq:IE1}, and $M_{k}$ is the filter gain that is chosen to minimize the input error covariances $P_{k}^{d}$. Then, we apply the constraints on the unconstrained actuator attack estimate in~\eqref{eq:IE2} and obtain the constrained actuator attack estimation $\hat{d}_{k-1}$.
The state prediction $\hat{x}_{k|k-1}$ can be updated incorporating the actuator attack estimate $\hat{d}_{k}^{u}$ in~\eqref{eq:hat x star}.
In~\eqref{eq:TU2}, the output $y_{k}$ is used to correct the current state estimate as in KF, where $L_{k}$ is the filter gain that is chosen to minimize the state error covariance $P_{k}^{x,u}$.
The state constraints are applied in~\eqref{eq:TU3} to obtain the constrained state estimation $\hat{x}_{k|k}$. The algorithm is summarized in Fig.~\ref{fig:demo} and presented in Algorithm~\ref{algorithm1}. The detailed algorithm derivation is described in Section~\ref{algodetail}.

\subsection{Algorithm Derivation}\label{algodetail}

\subsubsection{Prediction}
Given the previous state estimate $\hat{x}_{k-1|k-1}$, and the system model~\eqref{eq:sys2}, the current state can be predicted by~\eqref{eq:statepredict} under the assumption that the actuator attack $d_k$ is absent.
Its error covariance matrix is
\begin{align*}
    P_{k|k-1}^x \triangleq \E [\tilde{x}_{k|k-1}\tilde{x}_{k|k-1}^\top]= A_{k-1} P^x_{k-1} A_{k-1}^\top +Q_{k-1},
\end{align*}
where $P_{k}^x \triangleq \E [\tilde{x}_{k|k}\tilde{x}_{k|k} ^\top]$ is the state estimation error covariance matrix.
\begin{algorithm}[!t] \small
\caption{Attack-resilient Estimation with State and Input Constraint: $\mathcal{A}_kd_k \leq b_k$ and $\mathcal{B}_kx_k \leq c_k$}\label{algorithm1}
\algorithmicrequire $\hat{x}_{k-1|k-1}$; $P^{x}_{k-1}$; \\
\algorithmicensure $\hat{d}_{k-1}$; $P^d_{k-1}$; $\hat{x}_{k|k}$; $P^x_{k}$.
\begin{algorithmic}[1]
\LineComment{Prediction}
\State $\hat{x}_{k|k-1}=A_{k-1} \hat{x}_{k-1|k-1}+B_{k-1} u_{k-1}$;
\State $P_{k|k-1}^x=A_{k-1} P^x_{k-1} A_{k-1}^\top +Q_{k-1}$;
\LineComment{Actuator attack estimation}
\State $\tilde{R}_{k}=C_{k} P_{k|k-1}^x C_{k}^\top+R_{k}$;
\State $M_{k}=(G_{k-1}^\top C_{k}^\top \tilde{R}_{k}^{-1} C_{k} G_{k-1})^{-1} G_{k-1}^\top C_{k}^\top \tilde{R}^{-1}_{k}$;
\State $\hat{d}_{k-1}^{u}=M_{k} (y_{k}-C_{k} \hat{x}_{k|k-1})$;
\State $P^{d,u}_{k-1}=(G_{k-1}^\top C_{k}^\top \tilde{R}_{k}^{-1} C_{k} G_{k-1})^{-1}$;
\State $P_{k-1}^{xd} =  -P^x_{k-1}A_{k-1}^\top C_{k}^\top M_{k}^\top$
\State $\hat{d}_{k-1} = \argmin\limits_d (d-\hat{d}_{k-1}^{u})^\top(P_{k-1}^{d,u})^{-1}(d-\hat{d}_{k-1}^{u})$
\Statex \hspace{1cm} subject to $\mathcal{A}_{k-1} d \leq b_{k-1}$;
\State $\bar{\mathcal{A}}_{k-1}$ and $\bar{b}_{k-1}$ corresponding to active set;
\State $ \gamma_{k-1}^d = P_{k-1}^{d,u} \bar{\mathcal{A}}_{k-1}^\top(\bar{\mathcal{A}}_{k-1} P_{k-1}^{d,u} \bar{\mathcal{A}}_{k-1}^\top)^{-1}$;
\State $P^{d}_{k-1}=(I-\gamma_{k-1}^d\bar{\mathcal{A}}_{k-1})P^{d,u}_{k-1} (I-\gamma_{k-1}^d\bar{\mathcal{A}}_{k-1}) ^\top$;
\LineComment{Time update}
\State $\hat{x}^\star_{k|k}=\hat{x}_{k|k-1}+G_{k-1} \hat{d}_{k-1}^{u}$;
\State $P^{\star x}_{k}=A_{k-1}P_{k-1}^xA_{k-1}^\top +A_{k-1}P_{k-1}^{xd}G_{k-1}^\top$
\Statex \hspace{1.5cm}$+G_{k-1}(P_{k-1}^{xd})^\top A_{k-1}^\top + G_{k-1}P_{k-1}^{d}G_{k-1}^\top$
\Statex \hspace{1.5cm}$-G_{k-1} M_{k}C_{k}Q_{k-1}-Q_{k-1}C_{k-1}^\top M_{k}^\top G_{k-1}^\top+Q_{k-1}$;
\State $\tilde{R}^\star_{k}=C_{k} P^{\star x}_{k} C_{k}^\top +R_{k} -C_{k} G_{k-1} M_{k}R_{k}-R_{k} M_{k}^\top G_{k-1}^\top C_{k}^\top$;
\LineComment{Measurement update}
\State $L_k=(P^{\star x}_{k} C_{k}^\top - G_{k-1} M_{k}  R_{k})\tilde{R}^{\star \dagger}_{k}$; 
\State $\hat{x}^{u}_{k|k}=\hat{x}^\star_{k|k} +L_k(y_{k}-C_{k} \hat{x}^\star_{k|k})$;
\State $P^{x,u}_{k}= (I-L_k C_{k})G_{k-1} M_{k}R_{k}L_k^\top+ L_k R_{k} M_{k}^\top G_{k-1}^\top (I-L_k C_{k})^\top$
\Statex \hspace{1.5cm} $+(I-L_k C_{k}) P^{\star x}_{k} (I-L_k C_{k})^\top+L_k R_{k} L_k^\top$;
\State $\hat{x}_{k|k} = \argmin\limits_x (x-\hat{x}^{u}_{k|k})^\top (P_{k}^{x,u})^{-1}(x-\hat{x}^{u}_{k|k})$
\Statex \hspace{1cm} subject to $\mathcal{B}_{k} x \leq c_{k}$;
\State $\bar{\mathcal{B}}_{k}$ and $\bar{c}_{k}$ corresponding to active set;
\State $ \gamma_{k}^x = P_{k}^{x,u} \bar{\mathcal{B}}_{k}^\top(\bar{\mathcal{B}}_{k} P_{k}^{x,u} \bar{\mathcal{B}}_{k}^\top)^{-1}$;
\State $P^{x}_{k} = (I-\gamma_{k}^x \bar{\mathcal{B}}_{k})P^{x,u}_{k} (I-\gamma_{k}^x \bar{\mathcal{B}}_{k}) ^\top$;
\end{algorithmic}
\end{algorithm}
\subsubsection{Actuator attack estimation} \label{Actuator attack estimation}
The linear actuator attack estimator in~\eqref{eq:IE1} utilizes the difference between the measured output $y_k$ and the predicted output $C_k\hat{x}_{k|k-1}$.
Substituting~\eqref{eq:sys2} and~\eqref{eq:statepredict} into \eqref{eq:IE1}, we have
\begin{align*}
     \hat{d}_{k-1}^{u} = & M_{k}(C_{k}A_{k-1} \tilde{x}_{k-1|k-1} + C_{k}G_{k-1}d_{k-1}
     + C_{k}w_{k-1} + v_{k}),
\end{align*}
which is a linear function of the actuator attack $d_k$.
Applying the method of least squares from~\cite{sayed2003fundamentals}, which gives linear minimum-variance unbiased estimates,  we can get the optimal gain in actuator attack estimation:
\begin{align*}
   M_{k} = (G_{k-1}^\top C_{k}^\top \tilde{R}_{k}^{-1} C_{k} G_{k-1})^{-1} G_{k-1}^\top  C_{k}^\top \tilde{R}_{k}^{-1},
\end{align*}
where $\tilde{R}_{k} \triangleq C_{k} P_{k|k-1}^x C_{k} + R_{k}$.
It error covariance matrix is found by
\begin{align*}
    P_{k-1}^d &= M_{k}\tilde{R}_{k}M_{k}^\top=(G_{k-1}^\top C_{k}^\top \tilde{R}_{k}^{-1} C_{k} G_{k-1})^{-1}.
\end{align*}
We are now in the position to apply the constraint in~\eqref{eq:2}. The problem is formulated as the constrained convex optimization problem:
\begin{align}
\begin{split}
    \hat{d}_{k-1} &= \argmin\limits_d (d-\hat{d}_{k-1}^{u})^\top W^d_{k-1}(d-\hat{d}_{k-1}^{u}) \\
   & \text{subject to}\  \mathcal{A}_{k-1} d \leq b_{k-1}, \label{opt ineq}
\end{split}
\end{align}
where $W^d_{k-1}$ can be any positive definite symmetric weighting matrix.
In the current paper, we choose $W_{k-1}^d = (P_{k-1}^{d,u})^{-1}$ which results in the smallest error covariance as shown in~\cite{simon2002kalman}.
From Karush-Kuhn-Tucker (KKT) conditions of optimality, we can find the corresponding active constraints.
We denote by $\bar{\mathcal{A}}_{k}$ and $\bar{b}_{k}$ the rows of $\mathcal{A}_{k}$ and the elements of $b_k$ corresponding to the active constraints.
Then \eqref{opt ineq} becomes
\begin{align*}
\begin{split}
    \hat{d}_{k-1} &= \argmin\limits_d (d-\hat{d}_{k-1}^{u})^\top W^d_{k-1}(d-\hat{d}_{k-1}^{u}) \\
   & \text{subject to}\  \bar{\mathcal{A}}_{k-1} d = \bar{b}_{k-1}.
\end{split}
\end{align*}
The solution of the above program can be found by
\begin{align*}
    \hat{d}_{k-1} &= \hat{d}_{k-1}^{u} - \gamma_{k-1}^d(\bar{\mathcal{A}}_{k-1}\hat{d}_{k-1}^{u} - \bar{b}_{k-1}),
\end{align*}
where $\gamma_{k-1}^d \triangleq (W^d_{k-1})^{-1} \bar{\mathcal{A}}_{k-1}^\top(\bar{\mathcal{A}}_{k-1}  (W^d_{k-1})^{-1} \bar{\mathcal{A}}_{k-1}^\top)^{-1}$.
Its estimation error is
\begin{align}
    \tilde{d}_{k-1} &= (I-\gamma_{k-1}^d\bar{\mathcal{A}}_{k-1})\tilde{d}_{k-1}^{u} + \gamma_{k-1}^d(\bar{\mathcal{A}}_{k-1}d_{k-1}- \bar{b}_{k-1}).
\label{eq:dtile}
\end{align}
The error covariance matrix can be found by
\begin{align}
    P^{d}_{k-1} &\triangleq {\mathbb E}[\tilde{d}_{k-1}\tilde{d}_{k-1}^\top] =(I-\gamma_{k-1}^d\bar{\mathcal{A}}_{k-1})P^{d,u}_{k-1}(I-\gamma_{k-1}^d\bar{\mathcal{A}}_{k-1})^\top,
\end{align}
under the assumption that $\gamma_{k-1}^d(\bar{\mathcal{A}}_{k-1}d_{k-1}- \bar{b}_{k-1})=0$ in~\eqref{eq:dtile}.
The cross error covariance matrix of the state estimate and the actuator attack estimate is
\begin{align*}
    P_{k-1}^{xd} =-P^x_{k-1}A_{k-1}^\top C_{k}^\top M_{k}^\top.
\end{align*}

\subsubsection{Time update}
Given the actuator attack estimate $\hat{d}_{k-1}^u$, the state prediction $\hat{x}_{k|k-1}$ can be updated as in~\eqref{eq:hat x star}. We can derive the error covariance matrix of $\hat{x}^\star_{k|k}$ as
\begin{align}
    P^{\star x}_{k}&\triangleq \E [(\tilde{x}^\star_{k|k} )(\tilde{x}^\star_{k|k})^\top] 
    =A_{k-1}P_{k-1}^xA_{k-1}^\top +A_{k-1}P_{k-1}^{xd}G_{k-1}^\top \nonumber \\
    &+G_{k-1}P_{k-1}^{dx} A_{k-1}^\top + G_{k-1}P_{k-1}^{d}\hat{G}_{k-1}^\top +Q_{k-1} \nonumber \\
    &-G_{k-1}M_{k}C_{k}Q_{k-1}-Q_{k-1}C_{k-1}^\top M_{k}^\top G_{k-1}^\top,
\end{align}
where $P_{k-1}^{dx}= (P_{k-1}^{xd})^\top$.

\subsubsection{Measurement update}
In this step, the measurement $y_{k}$ is used to update the propagated estimate $\hat{x}^\star_{k|k}$ as shown in~\eqref{eq:TU2}.
The covariance matrix of the state estimation error is
\begin{align*}
    P^{x,u}_{k} &\triangleq \E [(\tilde{x}_{k|k}^u)( \tilde{x}_{k|k}^u)^\top]= (I-L_k C_{k})G_{k-1}M_{k}R_{k}L_k^\top +L_k R_{k} L_k^\top \nonumber\\
    &+ L_k R_{k} M_{k}^\top G_{k-1}^\top (I-L_k C_{k})^\top
    +(I-L_k C_{k}) P^{\star x}_{k} (I-L_k C_{k})^\top.
\end{align*}
The gain matrix $L_k$ is chosen by minimizing the trace norm of $P^{x,u}_{k}$: $\min_{L_k} \trace(P^{x,u}_{k})$.
The solution of the program is given by
\begin{align*}
    L_k=(P^{\star x}_{k} C_{k}^\top - G_{k-1} M_{k}  R_{k})\tilde{R}^{\star \dagger}_{k},
\end{align*}
where $\tilde{R}^\star_{k} \triangleq C_{k} P^{\star x}_{k}C_{k}^\top+R_{k}-C_{k}G_{k-1}M_{k}R_{k}-R_{k}M_{k}^\top G_{k-1}^\top C_{k}^\top$.

Now we apply the constraint in~\eqref{eq:2} to the state estimate $\hat{x}_{k|k}^u$.
As Section \ref{Actuator attack estimation}, we formalize the state estimation with the constraints as the constrained convex optimization problem:
\begin{align}
\begin{split}
    \hat{x}_{k|k} &= \argmin\limits_x (x-\hat{x}^u_{k|k})^\top W^x_k(x-\hat{x}^u_{k|k}) \\
   & \text{subject to}\  \mathcal{B}_{k} x \leq c_{k},
\end{split}\label{e035.0}
\end{align}
where we choose $W^x_{k} = (P^{x,u}_{k})^{-1}$ for the smallest error covariance.

We denote by $\bar{\mathcal{B}}_{k}$ and $\bar{c}_{k}$ the rows of $\mathcal{B}_{k}$ and the elements of $c_k$ corresponding to the active constraints of~\eqref{e035.0}. Using the active constraints, we reformulate the problem~\eqref{e035.0} as
\begin{align}
\begin{split}
    \hat{x}_{k|k} &= \argmin\limits_x (x-\hat{x}^u_{k|k})^\top W^x_k(x-\hat{x}^u_{k|k}) \\
   & \text{subject to}\  \bar{\mathcal{B}}_{k} x = \bar{c}_{k}.
\end{split}\label{e035}
\end{align}
The solution of the above problem is given by
\begin{align*}
    \hat{x}_{k|k} &= \hat{x}^u_{k|k} - \gamma_{k}^x(\bar{\mathcal{B}}_{k}\hat{x}^u_{k|k} - \bar{c}_{k}),
\end{align*}
where $\gamma_{k}^x \triangleq (W^x_{k})^{-1} \bar{\mathcal{B}}_{k}^\top(\bar{\mathcal{B}}_{k} (W^x_{k})^{-1}  \bar{\mathcal{B}}_{k}^\top)^{-1}$.
Under the assumption that $\gamma_{k}^x(\bar{\mathcal{B}}_{k}x_k - \bar{c}_{k})=0$ holds, the state estimation error covariance matrix can be expressed as
\begin{align}
    P^{x}_{k} &=\bar{\Gamma}_k P^{x,u}_{k} \bar{\Gamma}_k ^\top, \label{eq:Px projected}
\end{align}
where $\bar{\Gamma}_k \triangleq I-\gamma_{k}^x\bar{\mathcal{B}}_{k}$.

\section{Analysis}\label{analysis}

In Section~\ref{sec:proe}, we show that the projection induced by inequality constraints improves attack-resilient estimation and detection by decreasing the state estimation error and false negative rates.
However, the projection induces a biased estimate as well (Proposition 6 in~\cite{yong2015simultaneous}).
In this context, we will seek to prove practical exponential stability, as shown in Section~\ref{sec:stab}.
All the proofs of Theorems and Lemmas are presented in Appendix for compactness.

\subsection{Performance Improvement through Constraints}\label{sec:proe}
The projection reduces the estimation errors and the covariance, as  formulated in Theorem~\ref{the1}.
\begin{theorem}
We have $\|\tilde{x}_{k|k}\|\leq \|\tilde{x}_{k|k}^u\|$ and $\|\tilde{d}_{k}\|\leq \|\tilde{d}_{k}^u\|$; $P_k \leq P_k^u$, and $P_k^d \leq P_k^{d,u}$.
Strict inequality holds if $\rank(\bar{\mathcal{B}}_k) \neq 0$, and $\rank(\bar{\Ac}_k) \neq 0$, respectively.
\label{the1}
\end{theorem}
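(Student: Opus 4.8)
The plan is to reduce everything to the two error-propagation identities already obtained in the derivation, namely $\tilde{x}_{k|k}=\bar{\Gamma}_k\tilde{x}_{k|k}^u$ with $\bar{\Gamma}_k=I-\gamma_k^x\bar{\mathcal{B}}_k$, and the structurally identical $\tilde{d}_{k-1}=(I-\gamma_{k-1}^d\bar{\mathcal{A}}_{k-1})\tilde{d}_{k-1}^u$, both valid under the standing assumptions $\gamma_k^x(\bar{\mathcal{B}}_kx_k-\bar{c}_k)=0$ and $\gamma_{k-1}^d(\bar{\mathcal{A}}_{k-1}d_{k-1}-\bar{b}_{k-1})=0$ (i.e.\ the true values lie on the active faces). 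Since the attack-estimation and state-estimation steps are identical after the substitutions $\hat{x}_{k|k}^u\!\leftrightarrow\!\hat{d}_{k-1}^u$, $\bar{\mathcal{B}}_k\!\leftrightarrow\!\bar{\mathcal{A}}_{k-1}$, $P_k^u\!\leftrightarrow\!P_{k-1}^{d,u}$, I would prove the state claims in full and obtain the attack claims by the same computation.

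For the covariance ordering I would argue purely algebraically, which makes the result norm-free. Writing $P:=P_k^u$, $B:=\bar{\mathcal{B}}_k$ and $\Pi:=\gamma_k^x B=PB^\top(BPB^\top)^{-1}B$, the gain choice $W_k^x=(P_k^u)^{-1}$ makes $\Pi$ idempotent and makes $\Pi P=PB^\top(BPB^\top)^{-1}BP$ symmetric, so that $P\Pi^\top=\Pi P$. Substituting into $P_k=\bar{\Gamma}_kP_k^u\bar{\Gamma}_k^\top=(I-\Pi)P(I-\Pi)^\top$ and using these two facts collapses the expression to
\begin{align*}
P_k^u-P_k = P\,B^\top(BPB^\top)^{-1}BP = (BP)^\top(BPB^\top)^{-1}(BP)\succeq 0 ,
\end{align*}
since $(BPB^\top)^{-1}\succ0$. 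This gives $P_k\leq P_k^u$, and the same manipulation gives $P_{k-1}^d\leq P_{k-1}^{d,u}$. The difference has rank $\rank(B)$, so it is a nonzero positive-semidefinite matrix exactly when $\rank(\bar{\mathcal{B}}_k)\neq0$; this is the sense in which the inequality is strict.

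For the error-norm reduction I would use the fact that $\hat{x}_{k|k}$ is, by construction, the projection of $\hat{x}_{k|k}^u$ onto the feasible polytope $\{x:\mathcal{B}_kx\leq c_k\}$ taken in the inner product induced by $W_k^x=(P_k^u)^{-1}$. Because the true state $x_k$ is feasible and, by the standing assumption, lies on the active faces, it is a fixed point of this projection; the Pythagorean identity (equivalently the variational inequality $(\hat{x}_{k|k}^u-\hat{x}_{k|k})^\top W_k^x(x_k-\hat{x}_{k|k})\leq0$ for projections onto convex sets) then yields $\|x_k-\hat{x}_{k|k}^u\|_{W_k^x}^2=\|x_k-\hat{x}_{k|k}\|_{W_k^x}^2+\|\hat{x}_{k|k}-\hat{x}_{k|k}^u\|_{W_k^x}^2$, so the estimation error cannot grow, and the drop is strict precisely when the projection moves the estimate, i.e.\ when a constraint is active ($\rank(\bar{\mathcal{B}}_k)\neq0$). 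The same argument with $W_{k-1}^d=(P_{k-1}^{d,u})^{-1}$ handles $\tilde{d}_{k-1}$.

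The main obstacle I anticipate is the metric in which the norm inequality actually holds: $\bar{\Gamma}_k$ is an orthogonal projector only with respect to the weighted inner product $W_k^x$, not the plain Euclidean one (an oblique projector can expand Euclidean length), so the clean nonexpansiveness argument lives in the $\|\cdot\|_{W_k^x}$ norm. I would therefore either state the norm bounds in this weighted norm or, equivalently, lean on the norm-free covariance ordering above, and keep careful bookkeeping of which constraints are active (and of the feasibility-on-the-active-face assumptions) to make the strict-inequality claims precise.
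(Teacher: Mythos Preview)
Your covariance argument is correct and is essentially the same identity the paper exploits, though you push it further: the paper establishes $(I-\gamma_k^x\bar{\mathcal{B}}_k)^\top\gamma_k^x\bar{\mathcal{B}}_k=0$ and then compares \emph{traces}, whereas your computation $P_k^u-P_k=PB^\top(BPB^\top)^{-1}BP\succeq0$ delivers the full semidefinite ordering directly. That is a cleaner route to the covariance claim.

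Where you diverge from the paper is in the error-norm part. You rely on (i) the identity $\tilde{x}_{k|k}=\bar{\Gamma}_k\tilde{x}_{k|k}^u$, which needs the standing assumption that $x_k$ sits on the active face, and (ii) nonexpansiveness of the projection, which---as you correctly note---lives in the $W_k^x=(P_k^{x,u})^{-1}$ inner product, not the Euclidean one. The theorem, however, is stated for the Euclidean norm $\|\cdot\|$. The paper avoids both issues by invoking Lemmas~\ref{lem1} and~\ref{lem2}: it decomposes $\tilde{x}_{k|k}=(I-\gamma_k^x\bar{\mathcal{B}}_k)\tilde{x}_{k|k}^u+\alpha_k\gamma_k^x\bar{\mathcal{B}}_k\tilde{x}_{k|k}^u$ with a diagonal $\alpha_k$ whose entries lie in $[0,1)$, using only feasibility $\bar{\mathcal{B}}_kx_k\le\bar{c}_k$ (not equality). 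This coordinate-wise contraction is what the paper uses to claim the Euclidean bound, and it is the key lemma your proposal is missing. Your Pythagorean/variational-inequality argument is sound but proves the weighted-norm inequality, so as written it does not close the statement as phrased; to match the paper you would need the Lemma~\ref{lem1}/\ref{lem2} decomposition or an equivalent device that controls the oblique projector in the Euclidean metric.
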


The properties in Theorem~\ref{the1} are desired for accurate estimation as well as attack detection.
In particular, if the size of attack is smaller than the statistical threshold, the $\chi^2$ detector cannot distinguish the attack from the noise.
Given $d_k \neq 0$, the covariance reduction implies the threshold reduction:
\begin{align*}
d_k^\top (P_{k}^{d,u})^{-1}d_k
&\leq d_k^\top (P_{k}^{d})^{-1}d_k,
\end{align*}
where the test value $d_k^\top (P_{k}^{d})^{-1}d_k$ may reject the null hypothesis, while $d_k^\top (P_{k}^{d,u})^{-1}d_k$ cannot.
Moreover, the estimation error reduction implies an accurate test value:
\begin{align*}
&\|d_k^\top (P_{k})^{-1}d_k-(\hat{d}_k)^\top (P_{k})^{-1}\hat{d}_k\|\\ 
&\leq \|d_k^\top (P_{k})^{-1}d_k-(\hat{d}_k^{u})^\top (P_{k})^{-1}\hat{d}_k^{u}\|,
\end{align*}
which further reduces false negative rates.

\subsection{Stability Analysis}\label{sec:stab}
Although the projection reduces the estimation errors and the covariance as shown in Theorem~\ref{the1}, it trades the unbiased estimation off according to Proposition 6 in~\cite{yong2015simultaneous}.
This is because we can guarantee $\bar{\mathcal{B}}_kx_k \leq \bar{c}_k$ instead of $\bar{\mathcal{B}}_kx_k = \bar{c}_k$, but
the unconstrained estimate $\hat{x}_{k|k}^u$ is projected onto $\bar{\mathcal{B}}_kx_k = \bar{c}_k$.
In the absence of the projection ($\tilde{\Ac}_k=0$ and $\bar{\mathcal{B}}_k=0$, $\forall k$), Algorithm~\ref{algorithm1} reduces to the algorithm in~\cite{yong2016unified}, which is unbiased.

It is essential to construct an update law $\tilde{x}_{k|k}$ from $\tilde{x}_{k-1|k-1}$ to analyze stability of the estimation error.
However, the construction is not straight forward comparing to that in filtering with equality constraints~\cite{yong2015simultaneous, simon2002kalman} or filtering without constraints~\cite{yong2016unified, anderson1981detectability}.
Especially, it is difficult to find the exact relation between $\tilde{x}_{k|k}$ and $\tilde{x}_{k|k}^u$:
\begin{align*}
    \tilde{x}_{k|k} &= \tilde{x}^u_{k|k} - \gamma_{k}^x(\bar{\mathcal{B}}_{k}\hat{x}^u_{k|k} - \bar{c}_{k})\neq (I-\gamma_{k}^x\bar{\mathcal{B}}_{k})\tilde{x}^u_{k|k},
\end{align*}
because  $\bar{\mathcal{B}}_kx_k \leq \bar{c}_k$.

To address this issue, we first decompose the estimation error $\tilde{x}_{k|k}$ into two orthogonal spaces
\begin{align}
  \tilde{x}_{k|k} = (I-\gamma_{k}^x\bar{\mathcal{B}}_{k})\tilde{x}_{k|k}+\gamma_{k}^x\bar{\mathcal{B}}_{k}\tilde{x}_{k|k}
  \label{decompose x tilde}
\end{align}
and then, we apply the following lemmas to each term.
\begin{lemma}
    It holds that $(I-\gamma_k^x \bar{\mathcal{B}}_k)\tilde{x}_{k|k}=(I-\gamma_k^x \bar{\mathcal{B}}_k)\tilde{x}_{k|k}^u$.
    \label{lem1}
\end{lemma}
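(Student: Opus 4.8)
The plan is to compute the discrepancy $\tilde{x}_{k|k}-\tilde{x}^u_{k|k}$ explicitly from the projection step and then show that left-multiplication by $I-\gamma_k^x\bar{\mathcal{B}}_k$ annihilates it. First I would use the closed-form projected estimate $\hat{x}_{k|k}=\hat{x}^u_{k|k}-\gamma_k^x(\bar{\mathcal{B}}_k\hat{x}^u_{k|k}-\bar{c}_k)$ together with the convention $\tilde{a}\triangleq a-\hat{a}$ to obtain
\begin{align*}
\tilde{x}_{k|k}=\tilde{x}^u_{k|k}+\gamma_k^x(\bar{\mathcal{B}}_k\hat{x}^u_{k|k}-\bar{c}_k),
\end{align*}
so that $(I-\gamma_k^x\bar{\mathcal{B}}_k)\tilde{x}_{k|k}=(I-\gamma_k^x\bar{\mathcal{B}}_k)\tilde{x}^u_{k|k}+(I-\gamma_k^x\bar{\mathcal{B}}_k)\gamma_k^x(\bar{\mathcal{B}}_k\hat{x}^u_{k|k}-\bar{c}_k)$. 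It therefore suffices to show the last term is zero; note that the troublesome $\bar{c}_k$-dependent bias lives entirely inside this term, which is why the naive identity $\tilde{x}_{k|k}=(I-\gamma_k^x\bar{\mathcal{B}}_k)\tilde{x}^u_{k|k}$ fails but the present projected version does not.

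The key step is the right-inverse identity $\bar{\mathcal{B}}_k\gamma_k^x=I$. I would verify it by substituting $\gamma_k^x=(W^x_k)^{-1}\bar{\mathcal{B}}_k^\top(\bar{\mathcal{B}}_k(W^x_k)^{-1}\bar{\mathcal{B}}_k^\top)^{-1}$ and cancelling the inner Gram factor; its invertibility is guaranteed since $W^x_k=(P^{x,u}_k)^{-1}>0$ and $\bar{\mathcal{B}}_k$ has full row rank on the active set. This identity shows that $\gamma_k^x\bar{\mathcal{B}}_k$ is idempotent and, more directly, gives
\begin{align*}
(I-\gamma_k^x\bar{\mathcal{B}}_k)\gamma_k^x=\gamma_k^x-\gamma_k^x(\bar{\mathcal{B}}_k\gamma_k^x)=\gamma_k^x-\gamma_k^x=0,
\end{align*}
so the correction term drops out and the claim $(I-\gamma_k^x\bar{\mathcal{B}}_k)\tilde{x}_{k|k}=(I-\gamma_k^x\bar{\mathcal{B}}_k)\tilde{x}^u_{k|k}$ follows.

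The main obstacle, such as it is, is recognizing that $\bar{\mathcal{B}}_k$ acts as a left inverse of $\gamma_k^x$, i.e. $\bar{\mathcal{B}}_k\gamma_k^x=I$; once this is in hand the argument is purely algebraic and sidesteps the inequality-versus-equality issue (guaranteeing only $\bar{\mathcal{B}}_kx_k\leq\bar{c}_k$) that blocks a clean direct relation between $\tilde{x}_{k|k}$ and $\tilde{x}^u_{k|k}$. Geometrically, $I-\gamma_k^x\bar{\mathcal{B}}_k$ projects along the range of $\gamma_k^x$, which is exactly the subspace carrying the bias, so both errors agree after this projection regardless of the constraint's slackness. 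The same computation applies verbatim to the input estimate with $\bar{\mathcal{A}}_{k-1}$ and $\gamma_{k-1}^d$ in place of $\bar{\mathcal{B}}_k$ and $\gamma_k^x$, which I expect to be reused later in the stability analysis.
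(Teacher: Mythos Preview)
Your proposal is correct. Both you and the paper begin from the same identity
\[
\tilde{x}_{k|k}=\tilde{x}^u_{k|k}+\gamma_k^x(\bar{\mathcal{B}}_k\hat{x}^u_{k|k}-\bar{c}_k),
\]
but the final step differs. The paper invokes the constraint-satisfaction fact $\bar{\mathcal{B}}_k\hat{x}_{k|k}=\bar{c}_k$ to rewrite $\bar{c}_k$ as $\bar{\mathcal{B}}_k\hat{x}_{k|k}$, obtaining $\tilde{x}_{k|k}=\tilde{x}^u_{k|k}-\gamma_k^x\bar{\mathcal{B}}_k(\tilde{x}^u_{k|k}-\tilde{x}_{k|k})$, and then simply rearranges. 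You instead left-multiply by $I-\gamma_k^x\bar{\mathcal{B}}_k$ and kill the correction term via the purely algebraic identity $(I-\gamma_k^x\bar{\mathcal{B}}_k)\gamma_k^x=0$, which follows from $\bar{\mathcal{B}}_k\gamma_k^x=I$. The two key facts are equivalent (indeed $\bar{\mathcal{B}}_k\gamma_k^x=I$ is exactly what forces $\bar{\mathcal{B}}_k\hat{x}_{k|k}=\bar{c}_k$), so the arguments are close cousins; your route is marginally more self-contained in that it never appeals to the optimization interpretation, and the idempotence observation you isolate is reused verbatim in the paper's proof of Theorem~\ref{the1}.
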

\begin{lemma}
    It holds that $\gamma_k^x\bar{\mathcal{B}}_k\tilde{x}_{k|k}= \alpha_k\gamma_k^x\bar{\mathcal{B}}_k\tilde{x}_{k|k}^u$, where $\alpha_k=\diag{(\alpha_k^1,\cdots,\alpha_k^n)}$ and 
    $\alpha_k^i \triangleq (\gamma_k^x\bar{\mathcal{B}}_k\tilde{x}_k)(i)((\gamma_k^x\bar{\mathcal{B}}_k\tilde{x}_k^u)(i))^\dagger$ $\in [0,1)$ for $i=1,\cdots,n$.
    \label{lem2}
\end{lemma}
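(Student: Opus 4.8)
The plan is to reduce the statement to a componentwise comparison of the two vectors $\gamma_k^x\bar{\mathcal{B}}_k\tilde{x}_{k|k}$ and $\gamma_k^x\bar{\mathcal{B}}_k\tilde{x}_{k|k}^u$, and then to read off both the matrix identity and the scalar bound from the geometry of the projection in~\eqref{e035}. First I would record the two facts that drive everything. Because the active rows $\bar{\mathcal{B}}_k$ are exactly those met with equality by the minimizer of~\eqref{e035}, the projected estimate satisfies $\bar{\mathcal{B}}_k\hat{x}_{k|k}=\bar{c}_k$, so that $\bar{\mathcal{B}}_k\tilde{x}_{k|k}=\bar{\mathcal{B}}_kx_k-\bar{c}_k\leq 0$ by feasibility of the true state $x_k$ under~\eqref{eq:2}; I abbreviate $s_k\triangleq\bar{\mathcal{B}}_kx_k-\bar{c}_k$. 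Writing $r_k\triangleq\bar{\mathcal{B}}_k\hat{x}_{k|k}^u-\bar{c}_k\geq 0$ for the nonnegative violation of the active constraints by the unconstrained estimate, one gets $\bar{\mathcal{B}}_k\tilde{x}_{k|k}^u=s_k-r_k$. Using $\bar{\mathcal{B}}_k\gamma_k^x=I$, which follows directly from $\gamma_k^x=P_{k}^{x,u}\bar{\mathcal{B}}_k^\top(\bar{\mathcal{B}}_kP_{k}^{x,u}\bar{\mathcal{B}}_k^\top)^{-1}$, these yield
\begin{align*}
  \gamma_k^x\bar{\mathcal{B}}_k\tilde{x}_{k|k}=\gamma_k^x s_k,\qquad \gamma_k^x\bar{\mathcal{B}}_k\tilde{x}_{k|k}^u=\gamma_k^x(s_k-r_k),
\end{align*}
so the two vectors differ only by $\gamma_k^x r_k$.

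The matrix identity is then essentially definitional. Since $\alpha_k$ is diagonal with $\alpha_k^i$ defined as the entrywise ratio through the scalar pseudoinverse, the $i$-th component of $\alpha_k\gamma_k^x\bar{\mathcal{B}}_k\tilde{x}_{k|k}^u$ equals $(\gamma_k^x\bar{\mathcal{B}}_k\tilde{x}_{k|k})(i)$ whenever $(\gamma_k^x\bar{\mathcal{B}}_k\tilde{x}_{k|k}^u)(i)\neq 0$. The only point needing care is the zero case, where I must check that $(\gamma_k^x\bar{\mathcal{B}}_k\tilde{x}_{k|k}^u)(i)=0$ forces $(\gamma_k^x\bar{\mathcal{B}}_k\tilde{x}_{k|k})(i)=0$, so that the identity $0=\alpha_k^i\cdot 0$ remains consistent; this I would obtain from the two displayed expressions together with the sign information $s_k\leq 0\leq r_k$.

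The substance of the lemma is the bound $\alpha_k^i\in[0,1)$, and this is where the real work lies. Writing $\alpha_k^i=1+(\gamma_k^x r_k)(i)\big((\gamma_k^x(s_k-r_k))(i)\big)^{\dagger}$, the claim is precisely that the correction $\gamma_k^x r_k$ is, componentwise, of opposite sign to $\gamma_k^x\bar{\mathcal{B}}_k\tilde{x}_{k|k}^u$ and no larger in magnitude. Geometrically this says that projecting $\hat{x}_{k|k}^u$ onto the feasible hyperplane $\{\bar{\mathcal{B}}_kx=\bar{c}_k\}$ moves each coordinate of the error monotonically toward the feasible truth, which is exactly what makes the projection a contraction on the range of $\Pi_k\triangleq\gamma_k^x\bar{\mathcal{B}}_k$. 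I would try to establish this from the ordering $0\geq s_k\geq s_k-r_k$ together with the $(P_{k}^{x,u})^{-1}$-orthogonality of $\Pi_k$, i.e.\ the fact that $\Pi_k$ is the $(P_{k}^{x,u})^{-1}$-orthogonal projection onto $\mathrm{range}(\gamma_k^x)$ along $\ker\bar{\mathcal{B}}_k$.

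The hard part will be controlling this ratio when several constraints are active simultaneously: because $\gamma_k^x$ mixes the active residuals, the entrywise comparison between $\gamma_k^x s_k$ and $\gamma_k^x(s_k-r_k)$ does not follow from the entrywise ordering of $s_k$ and $s_k-r_k$ alone, and a careless argument can produce ratios outside $[0,1)$. The cleanest route I foresee is to treat one active constraint at a time, or equivalently to invoke non-degeneracy of the active set, since in that regime the perpendicular error collapses to a scalar multiple of the single direction $\gamma_k^x$ and the monotone-shrinkage picture — hence $\alpha_k^i\in[0,1)$ with $\alpha_k^i\to$ strictly below $1$ when $r_k\neq 0$ — holds transparently.
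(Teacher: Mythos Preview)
Your route is different from the paper's. You work algebraically with the residual vectors $s_k=\bar{\mathcal{B}}_kx_k-\bar{c}_k\le 0$ and $r_k=\bar{\mathcal{B}}_k\hat{x}_{k|k}^u-\bar{c}_k\ge 0$ and try to read the componentwise ratio off from their sign pattern. The paper instead argues geometrically in three lines: the set $\mathcal{C}_k=\{x:\bar{\mathcal{B}}_kx\le\bar{c}_k\}$ is closed convex, $\hat{x}_{k|k}$ is the $W_k^x$-metric projection of $\hat{x}_{k|k}^u$ onto $\mathcal{C}_k$, the true state $x_k$ lies in $\mathcal{C}_k$, and the conclusion is attributed to $\gamma_k^x\bar{\mathcal{B}}_k$ being the weighted projection with weight $W_k^x$. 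So the paper invokes a convex-projection contraction picture rather than any entrywise calculation.

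Your acknowledged difficulty in the multi-constraint case is genuine, and it is worth noting that the paper's argument does not close it either. Nonexpansiveness of metric projection onto $\mathcal{C}_k$ yields the \emph{norm} inequality $\|\tilde{x}_{k|k}\|_{W_k^x}\le\|\tilde{x}_{k|k}^u\|_{W_k^x}$, but the lemma asserts a \emph{componentwise} ratio in $[0,1)$ after multiplication by $\gamma_k^x$, and once $\gamma_k^x$ has more than one column it mixes the active residuals so that the coordinatewise ordering of $s_k$ and $s_k-r_k$ does not transfer. Your instinct to fall back to a single active constraint is exactly the regime in which both arguments are airtight: there $\mathrm{range}(\gamma_k^x)$ is one-dimensional, the two vectors $\gamma_k^x s_k$ and $\gamma_k^x(s_k-r_k)$ are parallel, and the componentwise ratio collapses to the scalar $s_k/(s_k-r_k)\in[0,1)$. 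For general active sets your proposal is more candid about what remains to be shown, while the paper's geometric statement supplies the right intuition but not a coordinate-by-coordinate justification; you should present the single-constraint case cleanly and flag the general case as relying on non-degeneracy, rather than leave it as an open ``hard part.''
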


According to Lemmas~\ref{lem1} and~\ref{lem2}, the errors in the space $I-\gamma_k^x\bar{\mathcal{B}}_k$ remain identical after the projection, while the errors in the space $\gamma_k^x\bar{\mathcal{B}}_k$ reduce through the projection.
By Lemmas \ref{lem1} and \ref{lem2}, \eqref{decompose x tilde} becomes
\begin{align}
	\tilde{x}_{k|k} = \Gamma_{k}\tilde{x}_{k|k}^u  \label{eq:new c_state error},
\end{align}
where $\Gamma_{k} \triangleq (I-\gamma_k^x \bar{\mathcal{B}}_k)+\alpha_k\gamma_k^x \bar{\mathcal{B}}_k$.
Note that $\alpha_k$ is an unknown matrix and thus cannot be used for the algorithm. We use it only for  analytical purposes.

Now under the following assumptions, we present the stability of Algorithm~\ref{algorithm1}.
\begin{assumption}\label{assum_bounded}
    It holds that $\rank(\mathcal{B}) < n$.
    There exist $\bar{a}$, $\bar{c}_y$, $\bar{g}$, $\bar{m}$, $\ubar{q}$, $\ubar{\beta}$, $\bar{\beta}$ $>0$, such that the following holds for all $k \geq 0$:
    \begin{align*}
        &\|A_k\| \leq \bar{a},
        &&\|C_{k}\| \leq \bar{c}_y,
        &&\|G_{k}\| \leq \bar{g},\\
        & \|M_{k}\| \leq \bar{m},
        &&Q_k\geq \ubar{q} I.
\end{align*}
\end{assumption}
It is assumed that $\rank(\mathcal{B}) < n$; i.e., the number of the state constrains are less than the number of state variables.
The rest of Assumption~\ref{assum_bounded} is widely used in literature on extended KF~\cite{kluge2010stochastic} and nonlinear ISE~\cite{kim2017attack}.
\begin{theorem}\label{the2}
Consider Assumption \ref{assum_bounded} and assume that there exist non-negative constants $\ubar{p}$ and $\bar{p}$ such that
$\ubar{p} I \leq P_k^{x,u} \leq \bar{p} I$ holds for all $k$.
Then the estimation errors $\tilde{x}_{k|k}$ and $\tilde{d}_k$ are practically exponentially stable in mean square; i.e., there exist constants $a_x,a_d,b_x,b_d,c_x,c_d$ such that, for all $k$,
\begin{align*}
    \mathbb{E}[\|\tilde{x}_k\|^2] &\leq a_xe^{-b_xk}\mathbb{E}[\|\tilde{x}_0\|^2] +c_x\nnum\\
    \mathbb{E}[\|\tilde{d}_k\|^2] &\leq a_de^{-b_dk}\mathbb{E}[\|\tilde{d}_0\|^2] +c_d.
\end{align*}
\end{theorem}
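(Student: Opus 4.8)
The plan is to reduce the claim to a scalar mean-square recursion of the form $\E[\|\tilde{x}_{k|k}\|^2]\le \rho\,\E[\|\tilde{x}_{k-1|k-1}\|^2]+c$ with $\rho\in(0,1)$ and $c\ge0$, and then unroll it. First I would assemble the one-step error map. Substituting the system \eqref{eq:sys2} into the four steps of Algorithm~\ref{algorithm1} (prediction \eqref{eq:statepredict} through measurement update \eqref{eq:TU2}) and using the unbiasedness identity $M_kC_kG_{k-1}=I$, the unconstrained attack error collapses to $\tilde{d}_{k-1}^{u}=-M_kC_kA_{k-1}\tilde{x}_{k-1|k-1}-M_kC_kw_{k-1}-M_kv_k$, the time update gives $\tilde{x}^{\star}_{k|k}=(I-G_{k-1}M_kC_k)A_{k-1}\tilde{x}_{k-1|k-1}+(I-G_{k-1}M_kC_k)w_{k-1}-G_{k-1}M_kv_k$, and the measurement update yields
\begin{align*}
\tilde{x}^{u}_{k|k}=\Phi_k\tilde{x}_{k-1|k-1}+\eta_k,\qquad \Phi_k\triangleq(I-L_kC_k)(I-G_{k-1}M_kC_k)A_{k-1},
\end{align*}
where $\eta_k$ gathers the $w_{k-1}$ and $v_k$ terms. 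This is the analogue, for the present input-and-state filter, of the closed-loop error map of the Kalman filter, carrying the extra factor $(I-G_{k-1}M_kC_k)$ from the unknown-input channel.

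Next I would pass to mean square. Rather than propagating the data-dependent projection $\Gamma_k$ of \eqref{eq:new c_state error}, I would invoke the pointwise non-expansiveness $\|\tilde{x}_{k|k}\|\le\|\tilde{x}^{u}_{k|k}\|$ from Theorem~\ref{the1}, which eliminates $\Gamma_k$ and $\alpha_k$ from the estimate. The gains $M_k,L_k$, and hence $\Phi_k$ and $\eta_k$, are $\Fc_{k-1}$-measurable (they are built from covariances available at time $k-1$), while $w_{k-1}$ and $v_k$ are zero-mean and independent of $\Fc_{k-1}$; conditioning on $\Fc_{k-1}$ therefore annihilates the cross term and leaves
\begin{align*}
\E[\|\tilde{x}_{k|k}\|^2]\le \E[\|\Phi_k\tilde{x}_{k-1|k-1}\|^2]+\E[\|\eta_k\|^2].
\end{align*}
The noise term is uniformly bounded: under Assumption~\ref{assum_bounded}, the assumed bound $P^{x,u}_k\le\bar{p}I$, boundedness of the noise covariances, and the resulting bound on $\|L_k\|$, a direct estimate gives $\E[\|\eta_k\|^2]\le c$, supplying the additive constant in the target recursion.

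The crux is the contraction $\E[\|\Phi_k\tilde{x}_{k-1|k-1}\|^2]\le\rho\,\E[\|\tilde{x}_{k-1|k-1}\|^2]$ with $\rho<1$, and I expect this to be the main obstacle. A naive Euclidean bound $\|\Phi_k\|<1$ need not hold, so I would certify the decay with a covariance-weighted Lyapunov function, in the spirit of the stochastic stability theory of the (extended) Kalman filter~\cite{kluge2010stochastic}. The key enabler is the assumption $Q_k\ge\ubar{q}I$: it forces the predicted covariance $P^x_{k|k-1}=A_{k-1}P^x_{k-1}A_{k-1}^\top+Q_{k-1}$ to be uniformly positive definite, hence invertible, even though the projected covariance $P^x_{k-1}$ may be rank-deficient. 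Using $(P^x_{k|k-1})^{-1}$ as the weight, together with $\ubar{p}I\le P^{x,u}_k\le\bar{p}I$ and the operator bounds of Assumption~\ref{assum_bounded}, one derives a Riccati-type inequality bounding the weighted one-step gain of $\Phi_k$ strictly below one; converting back to the Euclidean norm through $\ubar{p},\bar{p},\ubar{q}$ produces $\rho$. The difficulty is twofold: the product structure of $\Phi_k$ (measurement gain times unknown-input correction times $A_{k-1}$) makes the Riccati bookkeeping heavier than in the standard filter, and the projection is merely non-expansive and in fact biased, so the clean covariance monotonicity of the optimal filter must be recovered indirectly through Theorem~\ref{the1}.

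Finally I would close the argument and transfer it to the attack estimate. Combining the contraction with the noise bound gives $\E[\|\tilde{x}_{k|k}\|^2]\le\rho\,\E[\|\tilde{x}_{k-1|k-1}\|^2]+c$, and unrolling this scalar recursion yields $\E[\|\tilde{x}_k\|^2]\le \rho^{k}\E[\|\tilde{x}_0\|^2]+c/(1-\rho)$, i.e.\ the claimed bound with $a_x=1$, $b_x=-\ln\rho>0$, and $c_x=c/(1-\rho)$. For the attack error, Theorem~\ref{the1} gives $\|\tilde{d}_k\|\le\|\tilde{d}_k^{u}\|$, and the attack-error expression shows $\tilde{d}_k^{u}$ to be a boundedly-weighted linear function of $\tilde{x}_{k|k}$ plus zero-mean bounded noise, so $\E[\|\tilde{d}_k\|^2]\le c_1\E[\|\tilde{x}_{k|k}\|^2]+c_2$ for constants $c_1,c_2$. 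Substituting the state bound and relabelling constants then yields the corresponding practically exponentially stable bound for $\tilde{d}_k$ with the same decay rate, completing the proof.
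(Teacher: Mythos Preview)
Your overall architecture—derive the one-step map $\tilde{x}^{u}_{k|k}=\Phi_k\tilde{x}_{k-1|k-1}+\eta_k$ with $\Phi_k=(I-L_kC_k)(I-G_{k-1}M_kC_k)A_{k-1}$, obtain contraction via a covariance-weighted Lyapunov argument, bound the noise uniformly, unroll, and then transfer the bound to $\tilde{d}_k$ through its affine dependence on the state error—is exactly the structure of the paper's proof, and your derivation of $\Phi_k$ and the final transfer to $\tilde{d}_k$ are correct.

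The gap is in the contraction step. You propose to eliminate the data-dependent projection via the Euclidean non-expansiveness $\|\tilde{x}_{k|k}\|\le\|\tilde{x}^{u}_{k|k}\|$ and then establish a \emph{Euclidean} recursion $\E[\|\tilde{x}_{k|k}\|^2]\le\rho\,\E[\|\tilde{x}_{k-1|k-1}\|^2]+c$ with $\rho<1$, obtained by ``converting back to the Euclidean norm through $\ubar{p},\bar{p},\ubar{q}$.'' This conversion does not deliver $\rho<1$: the Riccati-type inequality gives a weighted contraction with rate $\delta=(1+\ubar{q}'/(\bar{a}'^{2}\bar{p}))^{-1}<1$, and sandwiching by $\ubar{p}I\le P^{x,u}_k\le\bar{p}I$ at each step costs the condition number, producing $\rho=\delta\,\bar{p}/\ubar{p}$, which need not be $<1$. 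The non-expansiveness from Theorem~\ref{the1} is only in the Euclidean metric, so it cannot be used inside the weighted inequality either.

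The paper avoids this by never leaving the weighted metric until after unrolling. It takes $V_k=(\tilde{x}^{u}_{k|k})^{\top}(P^{x,u}_k)^{-1}\tilde{x}^{u}_{k|k}$, substitutes $\tilde{x}_{k-1|k-1}=\Gamma_{k-1}\tilde{x}^{u}_{k-1|k-1}$ from~\eqref{eq:new c_state error}, and proves the matrix inequality (Claim~\ref{cl3})
\[
\Gamma_{k-1}^{\top}\bar{A}_{k-1}^{\top}(I-L_kC_k)^{\top}(P^{x,u}_k)^{-1}(I-L_kC_k)\bar{A}_{k-1}\Gamma_{k-1}\;<\;\delta\,(P^{x,u}_{k-1})^{-1}
\]
by inserting the bound $\bar{Q}_{k-1}\ge\frac{\ubar{q}'}{\bar{a}'^{2}\bar{p}}\bar{A}_{k-1}\bar{\Gamma}_{k-1}P^{x,u}_{k-1}\bar{\Gamma}_{k-1}^{\top}\bar{A}_{k-1}^{\top}$ into the covariance update~\eqref{eq: new P (Gamma)}, applying the matrix inversion lemma, and using $\|\Gamma_{k-1}\|\le1$, $\|\bar{\Gamma}_{k-1}\|=1$. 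This yields $\E[V_k]\le\delta\,\E[V_{k-1}]+c$ directly in the weighted norm; the factor $\bar{p}/\ubar{p}$ appears only in the constant $a_x$ after unrolling, not in the decay rate. So rather than discarding $\Gamma_{k-1}$ through a scalar bound on $\|\tilde{x}_{k|k}\|$, you must keep it (equivalently, its operator-norm bound) inside the weighted matrix inequality and run the whole recursion on $V_k$.
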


Theorem \ref{the2} holds under the assumption of boundedness of $P_k^{x,u}$. One of the sufficient conditions is the uniform detectability of the transformed system as shown in Theorem~\ref{the3}.
\begin{theorem}\label{the3}
If the pair $(C_{k}, \tilde{A}_{k-1})$ is uniformly detectable, then there exist non-negative constants $\ubar{p}$ and $\bar{p}$ such that for all $k$
\begin{align*}
    \ubar{p} I \leq P_k^{x,u} \leq \bar{p} I,
\end{align*}
where $\tilde{A}_{k-1} \triangleq (I-G_{k}M_{k}(C_{k}G_{k-1}M_{k})^{-1}C_{k})\bar{A}_{k-1}\bar{\Gamma}_{k-1}$
and $\bar{A}_{k-1} = (I-G_{k-1}M_{k}C_{k})A_{k-1}$.
\end{theorem}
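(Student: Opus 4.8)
The plan is to reduce the boundedness question to the classical result that uniform detectability, together with the uniform stabilizability supplied by the process noise, guarantees a uniformly bounded solution of a Kalman--Riccati recursion \cite{anderson1981detectability}. To invoke it I must first exhibit the recursion that $P_k^{x,u}$ actually obeys and then verify that its data meet the hypotheses of that result under Assumption \ref{assum_bounded}.

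First I would derive the one-step error map. Subtracting the time update \eqref{eq:hat x star} and the measurement update \eqref{eq:TU2} from the true dynamics \eqref{eq:sys2}, and using the defining property $M_kC_kG_{k-1}=I$ of the gain $M_k$ (so that $(I-G_{k-1}M_kC_k)G_{k-1}=0$), the unknown attack $d_{k-1}$ cancels and one obtains a relation of the form $\tilde{x}_{k|k}^u=(I-L_kC_k)\bar{A}_{k-1}\tilde{x}_{k-1|k-1}+\eta_{k-1}$, where $\bar{A}_{k-1}=(I-G_{k-1}M_kC_k)A_{k-1}$ and $\eta_{k-1}$ collects the noise contributions $(I-L_kC_k)(I-G_{k-1}M_kC_k)w_{k-1}$, $-(I-L_kC_k)G_{k-1}M_kv_k$ and $-L_kv_k$. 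Passing to covariances and closing the recursion through the previous projection $P_{k-1}^{x}=\bar{\Gamma}_{k-1}P_{k-1}^{x,u}\bar{\Gamma}_{k-1}^{\top}$ from \eqref{eq:Px projected} produces a Riccati-type recursion for $P_k^{x,u}$ whose nominal transition is $\bar{A}_{k-1}\bar{\Gamma}_{k-1}$ and whose measurement map is $C_k$.

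The subtlety is that the effective process noise $\eta_{k-1}$ is correlated with the measurement noise $v_k$ (both enter through the factor $G_{k-1}M_k$), so the recursion is a Kalman filter with correlated process and measurement noise. The standard de-correlation absorbs the cross-covariance $-G_{k-1}M_kR_k$ into the transition matrix; carrying this out and verifying that the resulting transition coincides with $\tilde{A}_{k-1}=(I-G_{k-1}M_k(C_kG_{k-1}M_k)^{-1}C_k)\bar{A}_{k-1}\bar{\Gamma}_{k-1}$ while the effective measurement map stays $C_k$ (so that the relevant observability pair is exactly $(C_k,\tilde{A}_{k-1})$) is the algebraic heart of the argument. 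Confirming this identity, together with checking that the de-correlated process-noise covariance is uniformly positive definite and that every coefficient is uniformly bounded---both of which rest on $Q_k\ge\ubar{q}I$ and the norm bounds of Assumption \ref{assum_bounded}---is the step I expect to be the main obstacle.

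With the recursion in canonical form, the two bounds follow by well-understood arguments. For the lower bound $\ubar{p}I\le P_k^{x,u}$ detectability is not needed: the process-noise injection $Q_{k-1}\ge\ubar{q}I$, propagated through the prediction step, keeps the covariance uniformly bounded away from zero, while the bounded gains prevent blow-up. For the upper bound $P_k^{x,u}\le\bar{p}I$ I would use that $L_k$ is chosen to minimize $\trace(P_k^{x,u})$, so the optimal covariance is dominated by that of any admissible suboptimal gain; uniform detectability of $(C_k,\tilde{A}_{k-1})$ supplies a uniformly bounded injection gain making the closed-loop transition uniformly exponentially stable, and a Lyapunov/comparison argument then bounds the corresponding suboptimal covariance uniformly. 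Combining the two bounds yields the claimed constants $\ubar{p}$ and $\bar{p}$, completing the proof.
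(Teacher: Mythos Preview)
Your proposal follows essentially the same route as the paper: derive the one-step error recursion $\tilde{x}_{k|k}^u=(I-L_kC_k)\bar{A}_{k-1}\tilde{x}_{k-1|k-1}+\text{noise}$, close it through the projection $P_{k-1}^x=\bar{\Gamma}_{k-1}P_{k-1}^{x,u}\bar{\Gamma}_{k-1}^\top$ to identify $P_k^{x,u}$ with the Kalman covariance of a transformed system with transition $\bar{A}_{k-1}\bar{\Gamma}_{k-1}$ and correlated noises, de-correlate by absorbing the cross term $-G_{k-1}M_kR_k$ into the transition to produce $\tilde{A}_{k-1}$, and then invoke the Anderson--Moore detectability result \cite{anderson1981detectability}. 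The paper carries out exactly this de-correlation by adding the zero term $K_k(y_k-C_k x_k-v_k)$ and solving $\mathbb{E}[\tilde{w}_{k-1}v_k^\top]=0$ for $K_k$, arriving at $K_k=G_kM_k(C_kG_{k-1}M_k)^{-1}$ (note the leading factor is $G_k$, not $G_{k-1}$ as in your display), and then cites Corollary~5.2 of \cite{anderson1981detectability} directly for both bounds rather than arguing the lower bound separately.
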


\section{Simulation}\label{simulation}
We simulate a scenario shown in Fig. \ref{fig:2agents}, where a multi-agent system that has state and input constraints gets attacked and moves to the attacker's desired place.

\begin{figure}[thpb]
\centering
 \includegraphics[width=0.5\textwidth]{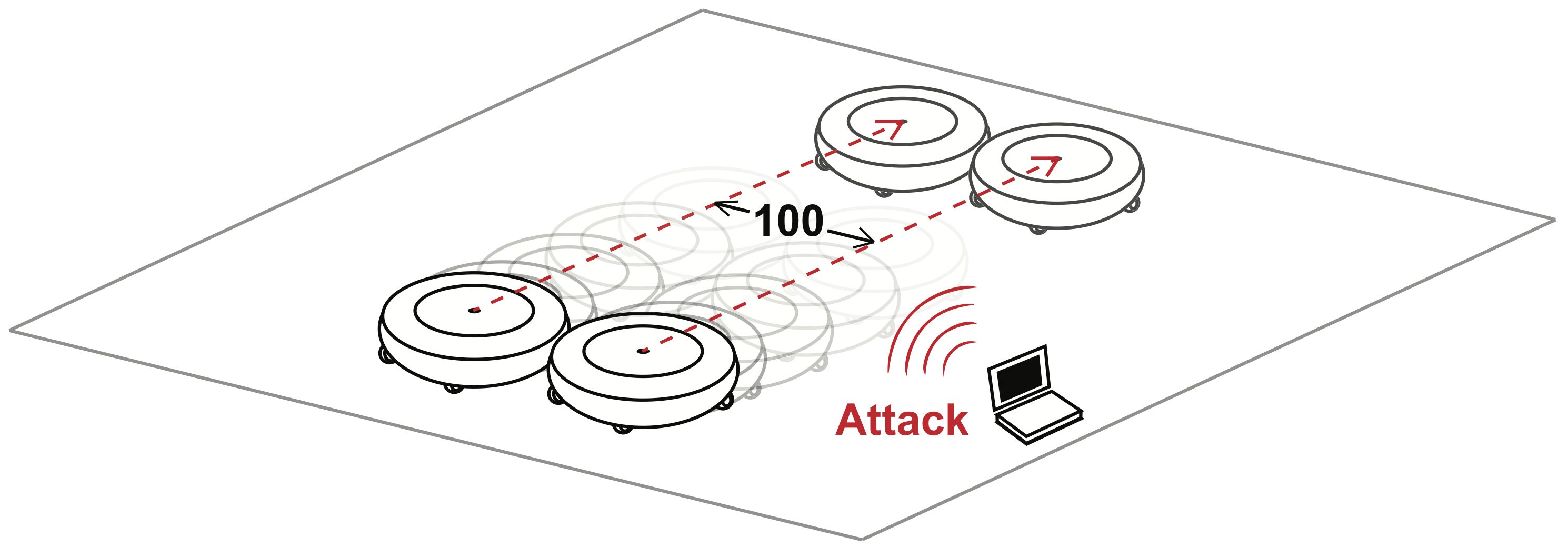}
 \vspace*{-5mm}\caption{Illustration of the simulation scenario: (i) red dash line denoted the path after attack; (ii) 100 denotes the minimum distance difference between two agents by physical state constraint.}
\medskip
\label{fig:2agents}
\end{figure}

\subsection{Single Agent Model}
We consider a double integrator  dynamic model for each agent
$i \in \{1,\cdots, n\}$, where $n$ denotes the number of agents in the system.
In this simulation, the subscript $(i)$ is used to represent the agent $i$'s vector/matrix; e.g., $x_k^{(i)}$ and $A_k^{(i)}$ denote the state and the system matrix of agent $i$.
Its discrete time state vector $x^{(i)}_k$ that considers planar position and velocity at time step $k$, is given by
\begin{align*}
    x^{(i)}_k = [r^{(i)}_{x,k}, r^{(i)}_{y,k}, v^{(i)}_{x,k}, v^{(i)}_{y,k}]^\top,
\end{align*}
where $r^{(i)}_{x,k}$, $r^{(i)}_{y,k}$ denote $x, y$ position coordinates and $v^{(i)}_{x,k}$, $v^{(i)}_{y,k}$ denote velocity coordinates.
The actuator attack in this simulation is constrained by the acceleration limit, and the state is constrained due to the speed limit and required minimum distance between the two agents:
\begin{align*}
|d^{(i)}_k(n)+u_k^{(i)}(n)| \leq 20, \quad
|v^{(i)}_{x,k}- v^{(i)}_{x,k}|\leq 80;\\
|r^{(i)}_{x,k}-r^{(j)}_{x,k}|\geq 100 \quad \text{or} \quad
|r^{(i)}_{y,k}-r^{(j)}_{y,k}| \geq 100,
\end{align*}
where $(n)$ denotes the $n^{th}$ element in the vector.

Each model is discretized into the following matrices with sampling time of 0.1 seconds:
\begin{equation*}
    A^{(i)}_k =
    \begin{bmatrix} 
    1 & 0 & 0.1 & 0    \\
    0 & 1 & 0    & 0.1 \\
    0 & 0 & 1    & 0 \\
    0 & 0 & 0    & 1
    \end{bmatrix},
    \quad
    B^{(i)}_k = G^{(i)}_k =
    \begin{bmatrix} 
    0    & 0 \\
    0    & 0 \\
    0.1 & 0 \\
    0    & 0.1 
    \end{bmatrix},
\end{equation*}
and the output $y^{(i)}_k$ is the sensor measurement of positions and velocity; i.e., $C^{(i)}_k = I$.
The covariance matrices of noises are chosen as $Q^{(i)}_k = 0.1 I$, and $R^{(i)}_k = 0.01 I$.

\subsection{Multi-agent System Model}
The multi-agent system of $n$ agents, where $n \in \N$, can be written in the form of system \eqref{eq:sys2}, where $A_k$ and $C_{k}$ are diagonal matrices as follows: $\diag(A_k) = (A^{(1)}, \cdots, A^{(i)})$, $\diag(C_{k}) = (C_{k}^{(1)}, \cdots, C_{k}^{(i)})$; $B_k = G_k \triangleq [B^{(1)},\cdots, B^{(i)}]^\top$.
The state vector, input vector, actuator attack and sensor measurement are denoted by $x_k \triangleq [x^{(1)}_k, \cdots, x^{(i)}_k]^\top$, $u_k \triangleq [u^{(1)}_k, \cdots, u^{(i)}_k]^\top$, $d_k \triangleq [d^{(1)}_k, \cdots, d^{(i)}_k]^\top$ and $y_{k} \triangleq [y^{(1)}_{k}, \cdots, y^{(i)}_{k}]^\top$, respectively.

\subsection{Attack Scenario}
We consider the scenario that the attacker injects the identical actuator attack to the both agents so that they move horizontally to the right at same time.
The unknown actuator attacks are
\begin{align*}
&d_k(1) = d_k(3) = \left\{ \,
\begin{IEEEeqnarraybox}[][c]{l?s}
\IEEEstrut
20 & if $100n\leq k < 40+100n$, \\
0 & if $40+100n\leq k < 60+100n$, \\
-20 & if $60+100n\leq k < 100+100n$,
\end{IEEEeqnarraybox}
\right.\nnum\\
&d_k(2) = d_k(4)=0,
\end{align*}
where $ n \in \{1, 2, \cdots, 9\}$.

\subsection{Simulation Result}

\begin{figure}[thpb]
\centering
 \includegraphics[width=0.45\textwidth]{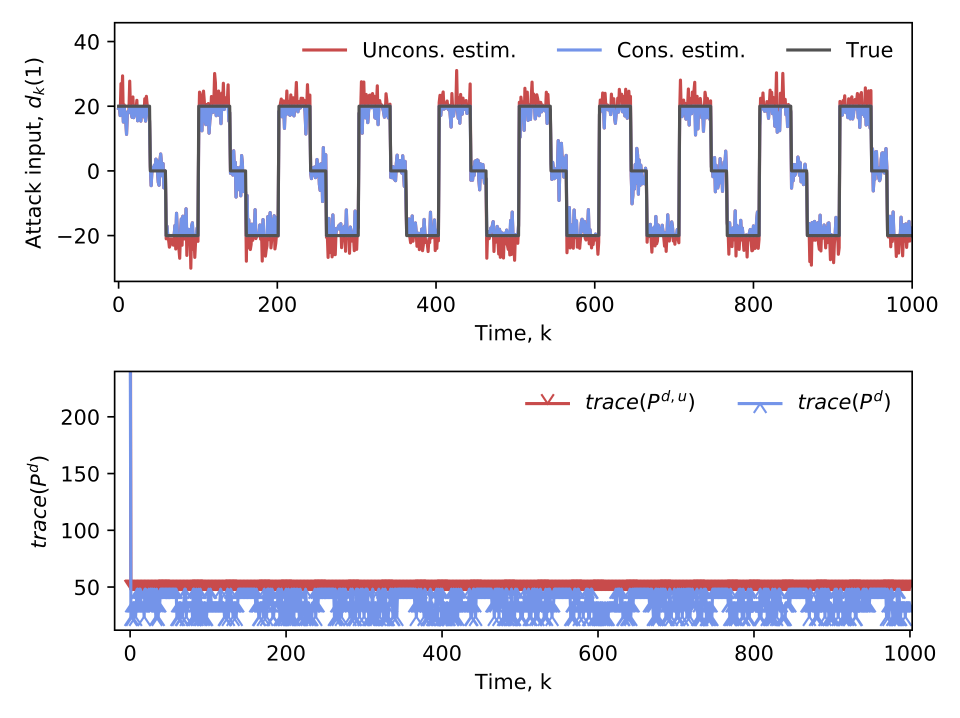}
 \vspace*{-2mm}\caption{Unconstrained and constrained estimation of the actuator attack $d_k(1)$. Trace of unconstrained estimate error covariance of the actuator attack $\trace(P^{d,u})$ and constrained estimate error covariance of the actuator attack $\trace(P^d)$.}
\medskip
\label{fig:da1}
\end{figure}

\begin{figure}[thpb]
\centering
 \includegraphics[width=0.48\textwidth]{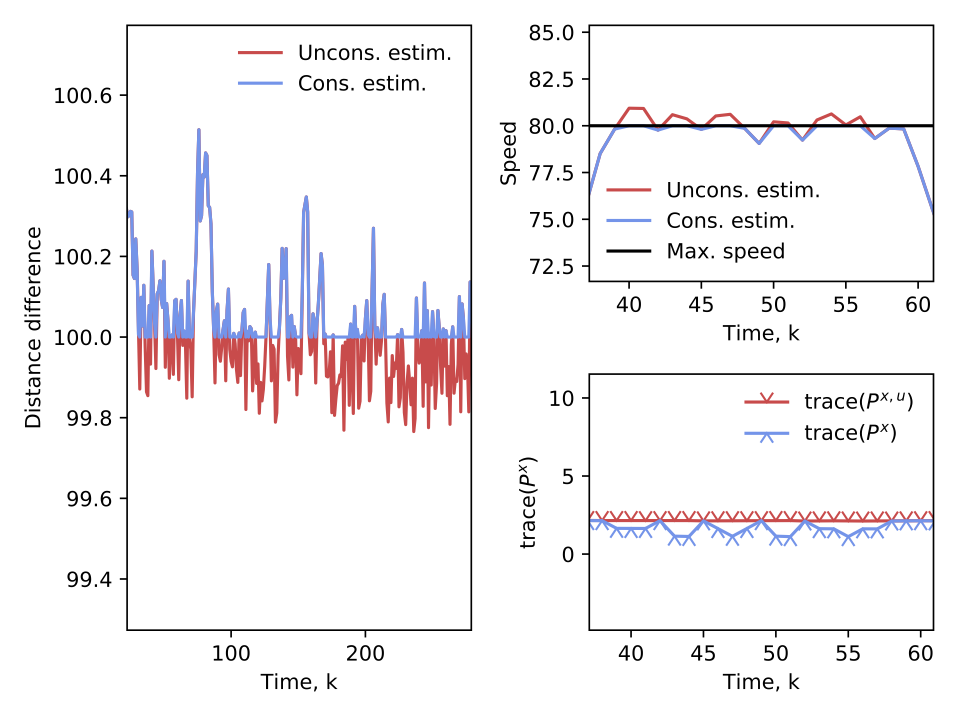}
 \vspace*{-5mm}\caption{Unconstrained and constrained estimation of states (distance difference of two agents and speed of one agent). Trace of unconstrained estimate error covariance of state $\trace(P^{x,u})$ and constrained estimate error covariance of state $\trace(P^x)$.}
\medskip
\label{fig:x32}
\end{figure}

Figures \ref{fig:da1} and \ref{fig:x32} show a comparison of the actuator attack and state estimation with and without the constraints. When the actuator attack estimate and the state estimate are projected to the constrained space, the constrained estimations have smaller estimation error and smaller error covariance as expected.


\section{Conclusion} \label{conclusion}
This paper studies attack-resilient estimation algorithm for time-varying stochastic systems given inequality constraints on the states and actuator attacks.
We formally prove that estimation errors and their covariances are less than those from unconstrained algorithms, which is a desired condition for attack detection in stochastic systems.
We prove that the estimation errors are practically exponentially stable.
A simulation is presented to reveal the attack-resilient property and efficiency of the proposed algorithm in attack detection.

\bibliography{references}{}
\bibliographystyle{ieeetr}

\appendix \label{appendix}

\subsection{Proof of Lemma~\ref{lem1}}
    The relationship can be obtained by applying $\bar{\mathcal{B}}_k\hat{x}_{k|k}=\bar{c}_k$ to
\begin{align*}
\tilde{x}_{k|k} &= x_k - \hat{x}_{k|k} = x_k - (\hat{x}_{k|k}^u-\gamma_k^x(\bar{\mathcal{B}}_k\hat{x}_{k|k}^u-\bar{c}_k))\nnum\\
&=\tilde{x}_{k|k}^u+\gamma_k^x(\bar{\mathcal{B}}_k\hat{x}_{k|k}^u-\bar{c}_k)\nnum\\
&=\tilde{x}_{k|k}^u+\gamma_k^x(\bar{\mathcal{B}}_k\hat{x}_{k|k}^u-\bar{\mathcal{B}}_k\hat{x}_{k|k})\nnum\\
&=\tilde{x}_{k|k}^u-\gamma_k^x(\bar{\mathcal{B}}_k\tilde{x}_{k|k}^u-\bar{\mathcal{B}}_k\tilde{x}_{k|k}),
\end{align*}
which implies the statement.
\hfill $\blacksquare$

\subsection{Proof of Lemma~\ref{lem2}}
The solution of $\bar{\mathcal{B}}_k x \leq \bar{c}_k$ defines a closed convex set ${\mathcal C}_k$. The point $\hat{x}_{k|k}^u$ is not an element of the convex set.
The point $\hat{x}_{k|k}$ has the minimum distance from $\hat{x}_{k|k}^u$ with metric $d(a,b)=\|a-b\|_{W_k^x}$ in the convex set ${\mathcal C}_k$ by~\eqref{e035}.
Since the solution $x_k$ is in the closed set ${\mathcal C}_k$, and $\gamma_k^x\bar{\mathcal{B}}_k$ is a weighted projection with weigh $W_k^x$, the statement holds. 
\hfill $\blacksquare$
\subsection{Proof of Theorem~\ref{the1}}

The statement for $\|\tilde{x}_{k|k}\|\leq \|\tilde{x}_{k|k}^u\|$ and
$\|\tilde{d}_{k}\|\leq \|\tilde{d}_{k}^u\|$
is the direct result of Lemmas~\ref{lem1} and~\ref{lem2}.

To show the rest of properties, we first identify the equality
\begin{align}
    (I-\gamma_k^x \bar{\mathcal{B}}_k)^\top\gamma_k^x \bar{\mathcal{B}}_k=0.
    \label{cle2}
\end{align}
Since $\bar{\mathcal{B}}_k\gamma_k^x=I$, it holds that 
$\gamma_k^x\bar{\mathcal{B}}_k\gamma_k^x = \gamma_k^x$, and $\bar{\mathcal{B}}_k\gamma_k^x\bar{\mathcal{B}}_k = \bar{\mathcal{B}}_k$; i.e., $\gamma_k^x = \bar{\mathcal{B}}_k^\dagger$.
Then, we have $\bar{\mathcal{B}}_k^\top (\gamma_k^x)^\top \gamma_k^x=\gamma_k^x$, which implies
$
\tilde{x}_{k|k}^\top(I-\gamma_k^x \bar{\mathcal{B}}_k)^\top\gamma_k^x \bar{\mathcal{B}}_k\tilde{x}_{k|k}=\tilde{x}_{k|k}^\top(\gamma_k^x\bar{\mathcal{B}}_k-\bar{\mathcal{B}}_k^\top (\gamma_k^x)^\top \gamma_k^x\bar{\mathcal{B}}_k)\hat{x}_{k|k}=0$.
Note that~\eqref{cle2} holds for $(\tilde{x}_{k|k}^u)^\top(I-\gamma_k^x \bar{\mathcal{B}}_k)^\top\gamma_k^x \bar{\mathcal{B}}_k\tilde{x}_{k|k}^u=0$ as well.

Inequalities for the covariance can be obtained by taking the trace
\begin{align*}
    \trace{(P_k^x)} &= \trace{((I-\gamma_k^x \bar{\mathcal{B}}_k)P_k^x(I-\gamma_k^x \bar{\mathcal{B}}_k)^\top)}\nnum\\
    &=\trace{((I-\gamma_k^x \bar{\mathcal{B}}_k)^\top(I-\gamma_k^x \bar{\mathcal{B}}_k)P_k^x)}\nnum\\
    &=\trace{(I-\gamma_k^x \bar{\mathcal{B}}_k)P_k^x)}\nnum\\
    &=\trace{(P_k^x)}-\trace{(\gamma_k^x \bar{\mathcal{B}}_kP_k^x)},
\end{align*}
where~\eqref{cle2} has been applied.
Because $\gamma_k^x \bar{\mathcal{B}}_kP_k^x=(P_k^x)^{-1} \bar{\mathcal{B}}_{k}^\top(\bar{\mathcal{B}}_{k} (P_k^x)^{-1}  \bar{\mathcal{B}}_{k}^\top)^{-1}\bar{\mathcal{B}}_kP_k^x>0$, we have the desired result. The same relation for $P_k^d$ can be obtained by the same procedure.
\hfill $\blacksquare$

\subsection{Proof of Theorem~\ref{the3}}
The unconstrained state estimation error can be found by
\begin{align}
    \tilde{x}^u_{k|k} &= (I-L_kC_{k}) \bar{A}_{k-1}\tilde{x}_{k-1|k-1} 
    +(I-L_kC_{k})\bar{w}_{k-1}+\bar{L}_kv_{k},
 \label{eq:new unc_state error}
\end{align}
where $\bar{w}_{k-1} \triangleq (I- G_{k-1}M_{k}C_{k})w_{k-1}$, and $\bar{L}_k \triangleq L_kC_{k}G_{k-1}M_{k}-L_k-G_{k-1}M_{k}$. 
Therefore, the update law of unconstrained covariance matrix is calculated from~\eqref{eq:new unc_state error} and \eqref{eq:Px projected}:
    \begin{align}
        P_{k}^{x,u} &=(I-L_kC_{k})\bar{A}_{k-1}\bar{\Gamma}_{k-1} P_{k-1}^{x,u} \bar{\Gamma}_{k-1}^\top\bar{A}_{k-1}^ \top(I-L_kC_{k})^\top \nnum\\
        &+\bar{L}_kR_{k}\bar{L}_k^\top +(I-L_kC_{k})\bar{Q}_{k-1}(I-L_kC_{k})^\top.
        \label{eq: new P (Gamma)}
    \end{align}
Covariance update law~\eqref{eq: new P (Gamma)} is identical to the covariance update law of the KF solution of the transformed system:
\begin{align}
    x_{k} &= \bar{A}_{k-1}\bar{\Gamma}_{k-1}x_{k-1}+\hat{w}_{k-1}\\
    y_{k} &= C_{k}x_{k}+v_{k},\label{eq:equiv1}
\end{align}
where $\hat{w}_{k-1} \triangleq - G_{k-1}M_{k}C_{k}w_{k-1} - G_{k-1}M_{k}v_{k}+w_{k-1}$.
However, in the transformed system, the process noise and measurement noise are correlated; i.e., $\mathbb{E}[\hat{w}_{k-1}v_{k}^\top]=-G_{k-1}M_{k}R_{k}\neq 0$. To decouple the noises, we add a zero term $K_k(y_{k}-C_{k}(\bar{A}_{k-1}\bar{\Gamma}_{k-1}x_{k}+\hat{w}_{k-1})-v_{k})$ to the state equation to obtain:
\begin{align*}
    x_{k} &= \tilde{A}_{k-1}x_{k-1}+\tilde{u}_{k-1}+ \tilde{w}_{k-1},
\end{align*}
where $\tilde{A}_{k-1} \triangleq (I-K_kC_{k})\bar{A}_{k-1}\bar{\Gamma}_{k-1}$, $\tilde{u}_{k-1} \triangleq K_ky_{k}$ is the known input, and $\tilde{w}_{k-1} \triangleq (I-K_kC_{k})\hat{w}_{k-1}-K_kv_{k}$ is the new process noise. The new process noise and the measurement noise could be decoupled by choosing the gain $K_k$ such that
\begin{align*}
    \mathbb{E}[\tilde{w}_{k-1}v_{k}^\top]=0.
\end{align*}
The solution can be found by $K_k = G_{k}M_{k}(C_{k}G_{k-1}M_{k})^{-1}$.
Then, the system~\eqref{eq:equiv1} becomes 
\begin{align*}
    x_{k+1} &= \tilde{A}_{k}x_{k}+\tilde{u}_{k}+\tilde{w}_{k}\\
    y_{k} &= C_{k}x_{k}+v_{k}.
\end{align*}
Since the pair $(C_{k},\tilde{A}_{k})$ is uniformly detectable, by Corollary 5.2 in~\cite{anderson1981detectability}, the statement holds. 
\hfill $\blacksquare$
\subsection{Proof of Theorem~\ref{the2}}
Consider the Lyapunov function
\begin{align*}
    V_k = (\tilde{x}^u_{k|k})^\top (P_k^{x,u})^{-1}(\tilde{x}^u_{k|k}).
\end{align*}
After substituting  \eqref{eq:new unc_state error} into the Lyapunov function, we obtain
\begin{align}\label{lyap_vk}
    V_k &= (\tilde{x}^u_{k-1|k-1})^\top\Gamma_{k-1}^\top\bar{A}_{k-1}^\top(I-L_kC_{k})^\top(P_k^{x,u})^{-1}\nnum\\
    & \times (I-L_kC_{k})\bar{A}_{k-1}\Gamma_{k-1}\tilde{x}^u_{k-1|k-1}\nnum\\
    &+2 (\tilde{x}^u_{k-1|k-1})^\top\Gamma_{k-1}^\top\bar{A}_{k-1}^\top(I-L_kC_{k})^\top\nnum\\
    &\times (P_k^{x,u})^{-1}(I-L_kC_{k})\bar{w}_{k-1}\nnum\\
    &+ 2(\tilde{x}^u_{k-1|k-1})^\top\Gamma_{k-1}^\top\bar{A}_{k-1}^\top(I-L_kC_{k})^\top(P_k^{x,u})^{-1}\bar{L}_{k}v_{k}\nnum\\
    &+ \bar{w}_{k-1}^\top(I-L_kC_{2,k})^\top(P_k^{x,u})^{-1}(I-L_kC_{k})\bar{w}_{k-1}\nnum\\
    &+ 2w_{k-1}^\top(I-L_kC_{k})^\top(P_k^{x,u})^{-1}\bar{L}_{k}v_{k}+ v_{k}^\top \bar{L}_{k}(P_k^{x,u})^{-1}\bar{L}_{k}v_{k}.
\end{align}
By the uncorrelatedness property~\cite{papoulis2002probability} of $w_{k-1}$, $v_k$ and $\tilde{x}^u_{k-1|k-1}$, the Lyapunov function \eqref{lyap_vk} becomes
\begin{align}
    \mathbb{E}[V_k] &= \mathbb{E}[(\tilde{x}_{k-1|k-1}^u)^\top\Gamma_{k-1}^ \top\bar{A}_{k-1}^\top(I-L_kC_{k})^\top(P_k^{x,u})^{-1}\nnum\\
    &\times \bar{A}_{k-1}(I-L_kC_{k})\Gamma_{k-1}(\tilde{x}_{k-1|k-1}^u)]\nnum\\
    &+ \mathbb{E}[\bar{w}_{k-1}^\top(I-L_kC_{k})^\top(P_k^{x,u})^{-1}(I-L_kC_{k})\bar{w}_{k-1}]\nnum\\
    &+ \mathbb{E}[v_{k}^\top \bar{L}_{k}(P_k^{x,u})^{-1}\bar{L}_{k}v_{k}].
    \label{Lyap:exp}
\end{align}
The following  statements are formulated to deal with each term in~\eqref{Lyap:exp}.
\begin{claim}
    There exists a constant $\delta \triangleq (\frac{\ubar{q}'}{\bar{a}'^2\bar{p}}+1)^{-1} \in (0,1)$, such that $\Gamma_{k-1}^\top\bar{A}_{k-1}^\top(I-L_kC_{k})^\top (P_k^{x,u})^{-1}(I-L_kC_{k})\bar{A}_{k-1}\Gamma_{k-1} < \delta (P_{k-1}^{x,u})^{-1}$.\\
    \label{cl3}
\end{claim}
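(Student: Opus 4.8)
The plan is to treat Claim~\ref{cl3} as the one-step contraction that powers the Lyapunov recursion. Abbreviate $F_k\triangleq(I-L_kC_{k})\bar A_{k-1}\Gamma_{k-1}$ and $M_k\triangleq F_k^\top(P_k^{x,u})^{-1}F_k$, so that the first term of \eqref{Lyap:exp} is $\E[(\tilde x_{k-1|k-1}^u)^\top M_k\,\tilde x_{k-1|k-1}^u]$. Once $M_k<\delta(P_{k-1}^{x,u})^{-1}$ is available, this term is at most $\delta\,\E[V_{k-1}]$, the two remaining (noise) terms of \eqref{Lyap:exp} are bounded by a constant using $P_k^{x,u}\ge\ubar{p} I$ and Assumption~\ref{assum_bounded}, and iterating $\E[V_k]\le\delta\,\E[V_{k-1}]+c$ with $\delta\in(0,1)$ yields the exponential bound of Theorem~\ref{the2}. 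Thus everything reduces to the single operator inequality in the claim.

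First I would propagate the covariance one step. Reading \eqref{eq:new unc_state error}--\eqref{eq:new c_state error} as $\tilde x_{k|k}^u=F_k\,\tilde x_{k-1|k-1}^u+\text{(noise)}$ and taking covariances yields a recursion of the form $P_k^{x,u}=F_kP_{k-1}^{x,u}F_k^\top+S_k$ with an injected noise covariance $S_k\ge0$. Granting a uniform lower bound $S_k\ge\ubar{q}'I$ for some $\ubar{q}'>0$, we obtain $P_k^{x,u}\ge F_kP_{k-1}^{x,u}F_k^\top+\ubar{q}'I$, and monotonicity of the matrix inverse gives $M_k\le F_k^\top(F_kP_{k-1}^{x,u}F_k^\top+\ubar{q}'I)^{-1}F_k$.

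The contraction factor then falls out of a push-through identity applied to the congruent form $P^{1/2}M_kP^{1/2}$, which must be shown to be $\le\delta I$. With $P\triangleq P_{k-1}^{x,u}$, $G\triangleq F_kP^{1/2}$ and $s\triangleq\ubar{q}'$,
\begin{align*}
P^{1/2}F_k^\top(F_kPF_k^\top+sI)^{-1}F_kP^{1/2}=G^\top(GG^\top+sI)^{-1}G=I-s(G^\top G+sI)^{-1},
\end{align*}
and this matrix is $\le\delta I$ as soon as $G^\top G\le\tfrac{s\delta}{1-\delta}I$. Under Assumption~\ref{assum_bounded} and $P_{k-1}^{x,u}\le\bar{p} I$ one has $\|G\|^2\le\|F_k\|^2\,\bar{p}\le\bar{a}'^2\bar{p}$, where $\bar{a}'$ bounds $\|F_k\|$ (here $\|\Gamma_{k-1}\|\le1$ follows from $\alpha_{k-1}\in[0,1)$ together with the orthogonality \eqref{cle2}). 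Choosing $\delta$ so that $\tfrac{s\delta}{1-\delta}=\bar{a}'^2\bar{p}$ gives exactly $\delta=(\tfrac{\ubar{q}'}{\bar{a}'^2\bar{p}}+1)^{-1}\in(0,1)$, hence $M_k\le\delta\,(P_{k-1}^{x,u})^{-1}$; strictness comes from a strict bound in any one of these estimates.

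The hard part is the uniform lower bound $S_k\ge\ubar{q}'I$. It cannot be read off the raw process noise, because the unknown-input step renders the effective noise $\bar Q_{k-1}=(I-G_{k-1}M_{k}C_{k})Q_{k-1}(I-G_{k-1}M_{k}C_{k})^\top$ rank-deficient: $M_{k}C_{k}G_{k-1}=I$ makes $G_{k-1}M_{k}C_{k}$ idempotent of rank $p$, so the $G$-direction receives no injected noise and $S_k$ is singular in general. This is exactly why Theorem~\ref{the2} postulates $\ubar{p} I\le P_k^{x,u}$ and Theorem~\ref{the3} secures it through uniform detectability of $(C_{k},\tilde A_{k-1})$; I would use those bounds --- rather than a direct estimate of $\bar Q_{k-1}$ --- to certify $P_k^{x,u}-F_kP_{k-1}^{x,u}F_k^\top\ge\ubar{q}'I$. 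A second point that must be handled with care is that the error propagates through the data-dependent contraction $\Gamma_{k-1}$ of Lemma~\ref{lem2}, whereas the algorithmic covariance \eqref{eq: new P (Gamma)} is built from $\bar\Gamma_{k-1}=I-\gamma_{k-1}^x\bar{\mathcal{B}}_{k-1}$; reconciling these two in the one-step bound, again via $\|\Gamma_{k-1}\|\le1$ and \eqref{cle2}, is the most delicate bookkeeping in the argument.
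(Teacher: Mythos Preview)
Your overall route is the paper's: extract a one-step contraction from the covariance recursion \eqref{eq: new P (Gamma)} via a matrix-inversion identity, then pass from $\bar\Gamma_{k-1}$ to $\Gamma_{k-1}$ using $\|\Gamma_{k-1}\|\le1$. The algebra differs only in packaging. Instead of your push-through identity $G^\top(GG^\top+sI)^{-1}G=I-s(G^\top G+sI)^{-1}$, the paper first bounds $\bar Q_{k-1}\ge\ubar q'I\ge\frac{\ubar q'}{\bar a'^2}\bar A_{k-1}\bar A_{k-1}^\top\ge\frac{\ubar q'}{\bar a'^2\bar p}\bar A_{k-1}\bar\Gamma_{k-1}P_{k-1}^{x,u}\bar\Gamma_{k-1}^\top\bar A_{k-1}^\top$, substitutes this into \eqref{eq: new P (Gamma)} to get $P_k^{x,u}>(1+\frac{\ubar q'}{\bar a'^2\bar p})(I-L_kC_k)\bar A_{k-1}\bar\Gamma_{k-1}P_{k-1}^{x,u}\bar\Gamma_{k-1}^\top\bar A_{k-1}^\top(I-L_kC_k)^\top$, and then invokes the matrix inversion lemma to flip to the $(P_{k-1}^{x,u})^{-1}$ side; the final swap from $\bar\Gamma_{k-1}$ to $\Gamma_{k-1}$ uses $\|\bar\Gamma_{k-1}\|=1$ together with $\|\Gamma_{k-1}\|\le1$. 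That is equivalent to what you propose.

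Where you depart from the paper is precisely the point you label ``the hard part,'' and there your argument has a genuine gap. The paper does \emph{not} route around the rank deficiency of $\bar Q_{k-1}$: it simply asserts $\bar Q_{k-1}\ge\ubar q'I$ (citing Assumption~\ref{assum_bounded} and Theorem~\ref{the3}) and proceeds. Your alternative --- use $\ubar pI\le P_k^{x,u}$ from Theorem~\ref{the2}/\ref{the3} to certify $P_k^{x,u}-F_kP_{k-1}^{x,u}F_k^\top\ge\ubar q'I$ --- is not an argument as stated: the two-sided bound $\ubar pI\le P_k^{x,u}\le\bar pI$ gives no control over that difference, since $F_kP_{k-1}^{x,u}F_k^\top$ can exceed $\ubar pI$. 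A second bookkeeping issue: your recursion $P_k^{x,u}=F_kP_{k-1}^{x,u}F_k^\top+S_k$ builds $F_k$ from the data-dependent $\Gamma_{k-1}$, whereas the algorithmic covariance \eqref{eq: new P (Gamma)} is propagated with $\bar\Gamma_{k-1}$; to match the paper you should run the inequality with $\bar\Gamma_{k-1}$ first and only switch to $\Gamma_{k-1}$ at the end via the norm bounds, rather than trying to read $S_k$ off a $\Gamma_{k-1}$-based recursion that is not the one the algorithm actually computes.
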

\begin{proof}
    Since $\rank(\Bc)<n$, it holds that $\rank(\bar{\Bc})<n$ and thus $\bar{\Gamma} \neq 0$. Therefore, $\|\bar{\Gamma}_{k-1}\| = 1$ because $\gamma_{k-1}^x\bar{\mathcal{B}}_{k-1}$ is a projection matrix.
    From Assumption \ref{assum_bounded} and Theorem \ref{the3}, we have
    \begin{align*}
        &\bar{Q}_{k-1} \geq \ubar{q}'I,
        &&P_{k-1}^{x} \leq \bar{p}I.
    \end{align*}
    Since $\|\bar{A}_{k-1}\|$ is upper bounded by $\bar{a}' \triangleq \bar{a}(1+\bar{g}\bar{m}\bar{c}_y)$, we can have $\bar{A}_{k-1}\bar{A}_{k-1}^\top \leq \bar{a}'^2I$. Then
    \begin{align}
        \bar{Q}_{k-1} &\geq \ubar{q}' \frac{\bar{A}_{k-1}\bar{A}_{k-1}^\top}{\bar{a}'^2}
        \geq  \frac{\ubar{q}'}{\bar{a}'^2}\bar{A}_{k-1}\bar{\Gamma}_{k-1}\bar{\Gamma}_{k-1}^ \top\bar{A}_{k-1}^\top\nnum\\
        &\geq \frac{\ubar{q}'}{\bar{a}'^2\bar{p}} \bar{A}_{k-1}\bar{\Gamma}_{k-1} P_{k-1}^{x,u} \bar{\Gamma}_{k-1}^ \top\bar{A}_{k-1}^\top.
        \label{eq:qbar}
    \end{align}
    Substitution of~\eqref{eq:qbar} into \eqref{eq: new P (Gamma)} yields
    \begin{align}\label{eq:ineq_in_proof}
         &P_k^{x,u}- (1+\frac{\ubar{q}'}{\bar{a}'^2\bar{p}})(I-L_kC_{k})\bar{A}_{k-1}\bar{\Gamma}_{k-1}P_{k-1}^{x,u}\bar{\Gamma}_{k-1}^\top\bar{A}_{k-1}^\top \nnum\\
        &\times (I-L_kC_{k})^\top>0,
    \end{align}
    where the inequality holds because $R>0$.
     As $(1+\frac{\ubar{q}'}{\bar{a}'^2\bar{p}})P_{k-1}^{x,u} >0$, the inverse of the left hand side of \eqref{eq:ineq_in_proof} exists and is symmetric positive definite. By the matrix inversion lemma~\cite{tylavsky1986generalization}, it follows that
    \begin{align}
        &(1+\frac{\ubar{q}'}{\bar{a}'^2\bar{p}})^{-1} (P_{k-1}^{x,u})^{-1} - \bar{\Gamma}_{k-1}^ \top\bar{A}_{k-1}^\top(I-L_kC_{k})^\top(P_k^{x,u})^{-1}\nnum\\
        &\times (I-L_kC_{k})\bar{A}_{k-1}\bar{\Gamma}_{k-1} >0.
        \label{cleq:000}
    \end{align}
    Since $\gamma_{k-1}^x\bar{\mathcal{B}}_{k-1}$ is a positive definite matrix, and $\|\alpha_{k-1}\|\leq 1$, we have
    \begin{align*}
        I-\gamma_{k-1}^x\bar{\mathcal{B}}_{k-1}\leq\Gamma_{k-1}=I-\gamma_{k-1}^x\bar{\mathcal{B}}_{k-1}+\alpha_{k-1}\gamma_{k-1}^x\bar{\mathcal{B}}_{k-1}\leq I,
    \end{align*}
    which implies $\|\Gamma_{k-1}\|\leq1$.
    Since $\|\bar{\Gamma}_{k-1}\| = 1$ and $\|\Gamma_{k-1}\|\leq1$, inequality~\eqref{cleq:000} proves the claim.
\end{proof}

\begin{claim}\label{cl4}
    There exists a positive constant $c \triangleq \bar{p}(1+\bar{l}\bar{c}_y)^2(1+\bar{g}\bar{m}\bar{c}_{2})^2\bar{q}\ \rank(Q_{k-1}) + \bar{p}(\bar{l}\bar{c}_y\bar{g}\bar{m}-\bar{l}-\bar{g}\bar{m})^2\bar{r_2}\ \rank(R_{2,k})$, such that
    \begin{align*}
        \mathbb{E}[\|(I-L_kC_{k})^\top(P_k^{x,u})^{-1}(I-L_kC_{k})\| \|\bar{w}_{k-1}\|^2]\\
        +\mathbb{E}[\|\bar{L}_{k}(P_k^{x,u})^{-1}\bar{L}_{k}\| \|v_{k}]\|^2]\leq c.\\
    \end{align*}
   \end{claim}
    \begin{proof}
        The first term is bounded by:
        \begin{align*}
            \mathbb{E}&[\|(I-L_kC_{k})^\top(P_k^{x,u})^{-1}(I-L_kC_{k})\| \|\bar{w}_{k-1}\|^2]\\
            =&\mathbb{E}[\|(I-L_kC_{k})^\top(P_k^{x,u})^{-1}(I-L_kC_{k})\|\\
            &\|(I- G_{k-1}M_{k}C_{k})\|^2\|w_{k-1}\|^2]\\
            &\leq \bar{p}(1+\bar{l}\bar{c}_y)^2(1+\bar{g}\bar{m}\bar{c}_{2})^2\bar{q}\ \rank(Q_{k-1}),
              \end{align*}
              where we apply $\|w_{k-1}\|^2 = \trace(w_{k-1}w_{k-1}^\top)\leq \bar{q}\ \rank(Q_{k-1})$.
            Likewise, the second term is bounded by:
              \begin{align*}
             \mathbb{E}[\|\bar{L}_{k}(P_k^{x,u})^{-1}\bar{L}_{k}\| \|v_{k}]\|^2] \leq \bar{p}(\bar{l}\bar{c}_y\bar{g}\bar{m}+\bar{l}+\bar{g}\bar{m})^2\bar{r_2}\ \rank(R_{k}).
        		\end{align*}
        		These complete the proof.
    \end{proof}

Through Claims~\ref{cl3} and~\ref{cl4},~\eqref{Lyap:exp} becomes
\begin{align*}
    \mathbb{E}[V_k] &\leq \delta \mathbb{E}[V_{k-1}] + c.
\end{align*}
By recursively applying the above relation, we have
\begin{align*}
    \mathbb{E}[V_k] &\leq \delta^k\mathbb{E}[V_0] + \sum_{i=0}^{k-1}\delta^i c \leq \delta^k\mathbb{E}[V_0] + \sum_{i=0}^{\infty}\delta^i c\nnum\\
    & = \delta^k\mathbb{E}[V_0] + \frac{c}{1-\delta},
\end{align*}
which implies practical exponential stability of the estimation error:
\begin{align*}
    \mathbb{E}[\|\tilde{x}_k^u\|^2] &\leq \frac{\bar{p}}{\ubar{p}}\delta^k\mathbb{E}[\|\tilde{x}_0^u\|^2]  +\frac{c\bar{p}}{(1-\delta)}\nnum\\
    &= a_x'e^{-b_x'k}\mathbb{E}[\|\tilde{x}_0^u\|^2] +c_x',
\end{align*}
where $(\tilde{x}_{k|k}^u)^\top (P_k^x)^{-1} (\tilde{x}_{k|k}^u) \geq \lambda_{\min}{((P_k^x)^{-1})}\|\tilde{x}_{k|k}^u\|^2 \geq \frac{1}{\bar{p}}\|\tilde{x}_{k|k}^u\|^2$
and
$(\tilde{x}_{0|0}^u)^\top (P_0^x)^{-1} \tilde{x}_{0|0}^u \leq \lambda_{\max}{((P_0^x)^{-1})}\|\tilde{x}_{0|0}^u\|^2 \leq \frac{1}{\ubar{p}}\|\tilde{x}_{0|0}^u\|^2$ have been applied.
Constants are defined by
\begin{align*}
    &a_x'\triangleq \frac{\bar{p}}{\ubar{p}},
    &&b_x'\triangleq \ln(1+\frac{\ubar{q}'}{\bar{h}^2\bar{a}'^2\bar{p}})
    &&c_x'\triangleq \frac{c\bar{p}}{(1-\delta)}.
\end{align*}
Since $\tilde{x}_{k|k}$ is a linear transformation of $\tilde{x}_{k|k}^u$, the same stability holds for $\tilde{x}_{k|k}$.
Likewise, the same stability holds for $\tilde{d}_k$ in~\eqref{eq:dtile} because it is a linear transformation of $\tilde{x}_{k|k}$. We omit its details.
\hfill $\blacksquare$


\end{document}